\numberwithin{equation}{section}
\theoremstyle{plain}
\newtheorem{theorem}{Theorem}[section]
\newtheorem{corollary}[theorem]{Corollary}
\newtheorem{lemma}[theorem]{Lemma}
\newtheorem{example}[theorem]{Example}
\newtheorem{proposition}[theorem]{Proposition}
\newtheorem{counterexample}[theorem]{Counterexample}
\theoremstyle{definition}
\newtheorem{definition}[theorem]{Definition}  
\newtheorem{assumption}[theorem]{Assumption}
\theoremstyle{remark}
\newtheorem{remark}[theorem]{Remark}
\newcommand\bdf{\begin{definition}}
	\newcommand\bpr{\begin{proposition}}
		\newcommand\brk{\begin{remark}}
			\newcommand\blm{\begin{lemma}}
				\newcommand\bexe{\begin{exercise}}
					\newcommand\bexa{\begin{example}}
						\newcommand\beqn{\begin{eqnarray*}}
							\newcommand\edf{\end{definition}}
						\newcommand\epr{\end{proposition}}
					\newcommand\erk{\end{remark}}
				\newcommand\elm{\end{lemma}}
			\newcommand\eexe{\end{exercise}}
		\newcommand\eexa{\end{example}}
	\newcommand\eeqn{\end{eqnarray*}}
\newcommand{\lip}[1]{{\mathrm{lip}}({#1})}
\newcommand{\mm}{\mathfrak m}
\newcommand{\vol}{{\rm Vol}_{ g}}
\newcommand{\N}{\mathbb{N}}
\newcommand{\R}{\mathbb{R}}
\newcommand{\diam}{\mathop{\rm diam}\nolimits} 
\newcommand{\supp}{\mathop{\rm supp}\nolimits}   %%\newcommand{\span}{\mathop{\rm span}\nolimits}   %
\newcommand{\Lip}{\mathop{\rm Lip}\nolimits}
\renewcommand{\d}{{\mathrm d}}
\newcommand{\D}{{\mathrm D}}
\newcommand{\restr}[1]{\lower3pt\hbox{$|_{#1}$}}
\newcommand{\nchi}{{\raise.3ex\hbox{$\chi$}}}
\newcommand{\MSC}[2][]{%
  \par\noindent\textbf{MSC}%
  \if\relax\detokenize{#1}\relax\else\space(#1)\fi%
  \textbf{:} #2\par
}
\title{{A new characterization of Sobolev spaces on Lipschitz differentiability spaces}
}
\begin{document}
	
	%	[中括号里面：输入显示在页眉的缩略词]{%大括号里面：输入文章标题}
	
	\author{Bang-Xian Han \thanks{School of Mathematics, Shandong University, 250100, Jinan, China, hanbx@sdu.edu.cn}
		\and Zhe-Feng Xu
		\thanks{School of Mathematical Sciences, University of Science and Technology of China,  230026, Hefei, China, xzf1998@mail.ustc.edu.cn}
		\thanks{SISSA,  34136, Trieste, Italy, zxu@sissa.it}
		\and Zhuo-Nan Zhu\thanks{School of Mathematical Sciences, University of Science and Technology of China, 230026, Hefei, China, zhuonanzhu@mail.ustc.edu.cn  }}
	
	\date{\today}

	\maketitle
	\begin{abstract}%摘要
		Numerous characterizations of Sobolev norms via the asymptotic behavior of non-local functionals have been established over the past decades; however, their validity beyond the PI framework remains poorly understood. We establish such a characterization on Lipschitz differentiability spaces without assuming either the doubling condition or a Poincar\'e inequality, by proving sharp two-sided Brezis--Van Schaftingen--Yung type asymptotic formulas. We also construct sharp counterexamples revealing the necessity of our assumptions,  and provide several examples which are of independent interest.
	\end{abstract}  
	\textbf{Keywords}: Sobolev space, asymptotic formula, non-local functional, Lipschitz differentiability space, metric measure space
	\maketitle
	\MSC[2020]{30L15, 28A25, 46E35}
%	\tableofcontents

	%-------------------------------------------------------------------------
	
	\section{Introduction}
Beginning with the seminal Bourgain--Brezis--Mironescu (BBM) asymptotic formula \cite{MR3586796}, and culminating in the Brezis--Van Schaftingen--Yung (BVY) characterization \cite{MR4275122},  the characterizations of Sobolev norms via non-local functionals have reformed our understanding of weak differentiability.
However, a fundamental question remains unclear: 
\begin{center}
    \emph{To what extent do these characterizations via non-local functionals rely on the underlying metric measure geometry of the space?}
\end{center} All existing generalizations in the framework of general metric measure spaces,  for example \cite{MR4458224, MR3912793},  are confined to PI spaces, i.e., doubling metric measure spaces supporting a Poincar\'e inequality. This condition, while robust, excludes many geometrically relevant settings: totally disconnected Cantor sets with positive measure, ``rooms and passages" domains with severe bottlenecks, and certain fractals arising in geometric measure theory.

In this paper, we establish  a new characterization in the setting of Lipschitz differentiability spaces—a class introduced by Cheeger \cite{MR1708448} and refined by Keith \cite{MR2041901} and Bate \cite{zbMATH06394350} that precisely captures the validity of Rademacher's theorem in metric measure spaces. This class strictly contains PI spaces but also encompasses spaces lacking any quantitative connectivity, such as positive-measure Cantor sets in $\R^n$ and closure of disjoint union of Riemannian domains. This work resolves the question of whether non-local characterizations of Sobolev spaces persist in the absence of quantitative connectivity, and reveals a fundamental link between the BVY functional and the theory of Lipschitz differentiability spaces.

We work under the following assumptions:
\begin{assumption}\label{assump}
Let $(X,\d,\mm)$ be a metric measure space.  We assume
\begin{itemize}
\item [(A)]
Porous sets are $\mm$-negligible (a minimal regularity  satisfied by all Lipschitz differentiability spaces \cite{zbMATH06140723}). Here a Borel set $S\subset X$ is said to be porous if for every $x_0\in S$, there exist $\eta>0$ and a sequence ${\{x_m}\}_{m \in \N} \subseteq X $ with $ x_m \to x_0$ such that
\[
B_{\eta\d(x_m,x_0)}(x_m)\cap S=\emptyset,
\]
where $B_r(x):=\{y\in X: \d(x,y)<r\}$.
    \item [(B)]
    There exist constants $r_0, N>0$ such that the $N$-truncated maximal density function
\begin{equation*}\label{Theta}
\Theta_{N,r_0}(x):=\sup_{0<r\leq r_0}\frac{\mm 
\big(B_r(x)\big)}{r^N}
\end{equation*}
is integrable on bounded sets.  This condition serves as a sharp relaxation of upper Ahlfors regularity.
\end{itemize}
\end{assumption}

The sharpness of our assumptions is demonstrated by novel Counterexample \ref{counterex} showing that without the integrability condition (B), the Brezis--Van Schaftingen--Yung (BVY) characterization {\bf may  fail} even on Lipschitz differentiability spaces with positive density. This reveals a delicate interplay between the dimensional parameter $N$  and the local behavior of the measure.

Meanwhile,  there are many interesting nontrivial examples satisfying our assumptions.
\begin{example}
The following spaces satisfy Assumption \ref{assump} for suitable dimensional parameters $N$: 
    \begin{itemize}
     \item The space $(\Omega, \d_g\restr{\Omega}, \vol\restr{\Omega})$, where $\Omega \subseteq M^n$ is a closed subset with positive volume in an $n$-dimensional Riemannian manifold $(M^n,g)$;
        \item The space $(X,\d_g\restr{X}, \vol\restr{X})$, where $X=\overline{\bigcup_{i\in\N}\Omega_i}$ is the closure of a countable union of pairwise disjoint subsets $\{\Omega_i\}_{i\in\N}$ with $\vol(X)>0$ in a Riemannian manifold $(M^n,g)$;
\item The space $(K,|\cdot|_{{\mathrm{Euc}}}\restr{K},\mathcal{L}^n\restr{K})$, where $K\subseteq \R^n$ is a Cantor set with positive Lebesgue measure. Note that, being totally disconnected, this space fails to support any local Poincar\'e inequality or doubling condition;
\item The closure of the ``rooms and passages'' domain constructed in \cite{zbMATH00482856, zbMATH03606994}. Although this space is connected, its severe geometric bottlenecks cause it to fail both the local Poincar\'e inequality and the doubling condition;
\item The Laakso spaces introduced by Laakso in \cite{zbMATH01443217}, which are PI spaces; and the shortcut Laakso spaces recently introduced in \cite{arXiv:2510.25715}, which are purely PI-unrectifiable.
    \end{itemize}
\end{example}

%This property is naturally satisfied in the following two important cases:
%\begin{itemize}
%    \item $\mm$ satisfies the upper Ahlfors $N$-regularity assumption. There exists $C>0$ such that
%\[
%\mm(B_r(x))\leq C r^N
%\quad\text{for } x\in X,\ 0<r\leq r_0.
%\]
%In this case, $\Theta_{N,r_0}$ is actually $L^\infty(X,\mm)$-integrable.
%\item $\mm$ is a doubling measure with optimal doubling constant $\beta$, and $N= \frac{\log \beta}{\log 2}$, the doubling dimension (see Definition \ref{2.3}, Proposition \ref{2.166}).
%\end{itemize}

%for the upper Ahlfors $N$-regularity assumption, namely,
%\[
%\mm(B_r(x))\leq C r^N
%\quad\text{for } x\in X,\ 0<r\leq r_0,
%\]
%with some uniform constant $C$. In particular, for $\beta$-doubling metric measure spaces, the $L^1_{\mathrm{bloc}}(X,\mm)$-integrability of $\Theta_{N,r_0}$ can be replaced by the assumption $N\geq \frac{\log \beta}{\log 2}$ in our cases, where $\frac{\log \beta}{\log 2}$ is the doubling dimension.

Denote by ${\rm lip}(u)$ and ${\rm Lip}(u)$ the lower and upper pointwise Lipschitz constants, and by $\theta_N^-$ and $\theta_N^+$ the $N$-lower and $N$-upper pointwise density functions, respectively. We refer to Section~\ref{section2} for the remaining definitions. We now state our first main theorem concerning the BVY formula on metric measure spaces.

\begin{theorem}[Theorems \ref{T3.2} and \ref{T3.1}]
Let $p\geq 1$, $N>0$, and let $(X,\d,\mm)$ be a metric measure space. Under Assumption \ref{assump}, there exists $C:=C(p,N)$ such that for any $u\in {\rm Lip}_b(X,\d)$,
\begin{equation*}
    \begin{aligned}
        \varliminf_{\lambda\rightarrow +\infty} \lambda^p (\mm\times \mm)({E_{\lambda,u}})
        &\geq C\int_{ X}\theta_N^-(x)\,
\frac{\big({\rm{lip}}(u)(x)\big)^{N+p}}{\big({\rm{Lip}}(u)(x)\big)^N}\,\d\mm(x),\\
\varlimsup_{\lambda\rightarrow +\infty} \lambda^p (\mm\times \mm)({E_{\lambda,u}})
&\leq 2\int_{X}\theta_N^+(x)\big({\rm{Lip}}(u)(x)\big)^p\,\d\mm(x),
    \end{aligned}
\end{equation*}
where
\begin{equation}\label{E}
E_{\lambda,u}:=\left\{(x,y)\in X\times X: x\neq y,\, |u(x)-u(y)|\geq \lambda \big(\d(x,y)\big)^{\frac{N}{p}+1}\right\}.
\end{equation}
\end{theorem}

The proof requires new techniques compared to the PI setting. In the absence of Poincaré inequalities, we cannot control the oscillation of $u$  by its gradient. Instead, the lower bound (Theorem \ref{T3.2}) relies on a novel decomposition according to the ratio $\lip{u}/\Lip(u)$, while the upper bound (Theorem \ref{T3.1}) requires careful control of the maximal density function.

\medskip

\begin{remark}
In this theorem,  the integrability of $\Theta_{N,r_0}$ on bounded sets
is  only used for the upper bounds and cannot be omitted in general, even in the setting of Lipschitz differentiability spaces (see Counterexample~\ref{counterex}). This reveals a delicate interplay between the dimensional parameter $N$  and the infinitesimal structure of the space. Furthermore, this assumption can be replaced by $\Theta_{N,r_0}\in L^1_{\mathrm{loc}}(X,\mm)$ if one considers  $u\in {\rm Lip}_c(X,\d)$.
\end{remark}

\bigskip

In his seminal paper \cite{MR1708448}, Cheeger studied the differentiability of Lipschitz functions on  PI spaces.  He proved that every Lipschitz function $u$ is differentiable and that ${\rm Lip}(u)={\rm lip}(u)$ almost everywhere. Over the following decade, this ``generalized Rademacher theorem'' was further developed by Keith \cite{MR2041901} and several other authors, eventually leading to the notion  ``Lipschitz differentiability space" given by Bate \cite{zbMATH06394350}. A characterization \cite{zbMATH06394350, arXiv:1208.2869, MR2041901} due to Keith, Bate, and Gong asserts that $(X,\d,\mm)$ is a Lipschitz differentiability space if and only if ${\rm Lip}(u)$ and ${\rm lip}(u)$ are comparable for any Lipschitz function $u$ and the space is pointwise doubling. 

A deeper insight emerges when we focus on Lipschitz differentiability spaces. Recent breakthroughs by Bate, Eriksson-Bique, and Soultanis \cite{arXiv:2402.11284} establish that in such spaces, the minimal $*$-upper gradient $|\D u|_*$
   coincides with the pointwise Lipschitz constant ${\rm Lip}(u)$. By combining their results with our asymptotic formula, we obtain (Corollary \ref{C3.7}):
   \[
   \lim_{\lambda\to+\infty} \lambda^p (\mm\times\mm)(E_{\lambda,u}) \approx \||\D u|_*\|_{L^p}^p.
   \]
   
   {\bf To the best of our knowledge, this is the first non-local characterization of Sobolev norms on metric measure spaces that neither assumes the Poincaré inequality nor the doubling condition.} We  refer the reader to the  following papers studying the BVY formula from different perspectives:  one-parameter families of operators \cite{MR4512396}, ball Banach function spaces \cite{MR4525737,MR4645709,MR4777237}, general
domains \cite{poliakovsky2022some},  anisotropic spaces \cite{MR4557343},  Lorentz norms \cite{MR4279388}, Triebel--Lizorkin spaces \cite{MR4525722} and Orlicz spaces \cite{MR4549890,MR4732916}.

\begin{corollary}[Corollary \ref{C3.7}]\label{1.5}
Let $p,N\geq 1$, $b\geq a>0$, and let $(X,\d,\mm)$ be a Lipschitz differentiability space satisfying Assumption \ref{assump}(B). Assume that there exists $a\leq \theta^-_N(x)\leq \theta^+_N(x)\leq b$ for $\mm$-{\rm{a.e.}}\ $x\in X$. Then there exist constants $C_1:=C_1(p,N,a)$ and $C_2:=C_2(b)$ such that, for any $u\in {\rm Lip}_b(X,\d)$,
\begin{equation*}
\begin{aligned}
\varliminf_{\lambda\rightarrow +\infty} \lambda^p (\mm\times \mm)({E_{\lambda,u}})
&\geq C_1\||\D u|_*\|_{L^p(X,\mm)}^p,\\
\varlimsup_{\lambda\rightarrow +\infty} \lambda^p (\mm\times \mm)({E_{\lambda,u}})
&\leq C_2\||\D u|_*\|_{L^p(X,\mm)}^p.
\end{aligned}
\end{equation*}
\end{corollary}

\bigskip
Beyond the asymptotic formula itself, our results reveal {\bf structural rigidity} regarding the dimensional parameter $N$. In Theorem \ref{T3.5}, we prove that in Lipschitz differentiability spaces, the exponent $N\geq 1$  is uniquely determined: if the BVY formula yields finite non-zero limits for some Lipschitz function, then $N$  must equal the ``geometric dimension" of the space.
Furthermore, on doubling geodesic spaces, in terms of the doubling dimension, we establish an {\bf equivalence characterization} (Theorem \ref{final thm}): the BVY formula holds with two-sided bounds if and only if the $N$-lower and $N$-upper densities are uniformly comparable ($a_1 \leq \theta_N^- \leq \theta_N^+ \leq a_2$). This connects the analytic property (Sobolev characterization) directly with the geometric property (measure density). 
As for the case $0<N<1$, the BVY formula turns out to be degenerate in Lipschitz differentiability spaces (see Theorem \ref{degenerate}).

%Fortunately, our second main theorem partially answers this question: we establish an equivalence between the BVY formula and the density functions on doubling geodesic metric measure spaces. Note that we do not need integrability of $\Theta_{N,r_0}$ in the assumptions.
\begin{theorem}[Theorem \ref{final thm}]
Let $p\geq 1$ and let $(X,\d,\mm)$ be a geodesic metric measure space, where $\mm$ is a doubling measure with optimal doubling constant $\beta$. 
    %equipped with a $\beta$-doubling measure $\mm$. 
    Set $N=\frac{\log \beta}{\log 2}$ (the doubling dimension). Then the following statements are equivalent: 
		\begin{itemize}
		\item[\bf{(a)}] There exist positive constants $C_1$ and $C_2$ such that for any $u\in {\rm Lip}_b(X,\d)$,
		\begin{equation*}
			\begin{aligned}
				C_1\int_{X}\frac{\big({\rm lip}(u)(x)\big)^{N+p}}{\big({\rm Lip}(u)(x)\big)^N}&\,\d\mm(x) \leq \varliminf_{\lambda\rightarrow +\infty} \lambda^p (\mm\times \mm)({E_{\lambda,u}})\\
				 & \leq  \varlimsup_{\lambda\rightarrow +\infty} \lambda^p (\mm\times \mm)({E_{\lambda,u}}) \leq C_2\int_{X}\big({\rm Lip}(u)(x)\big)^p\,\d\mm(x),
			\end{aligned}
		\end{equation*}
		where $E_{\lambda,u}$ is defined in \eqref{E}.
		\item[\bf{(b)}] There exist positive constants $a_1$ and $a_2$ such that
        \begin{equation*}
            a_1\leq\theta_{N}^-(x)\leq \theta_{N}^+(x)\leq a_2, \quad\quad \mathrm{for}\,\,\mm\mathrm{\text{-}a.e. }\,x \in X.
        \end{equation*}
		\end{itemize}
\end{theorem}
% \begin{remark}
% We emphasize that Assumption \ref{assump}  is not required in this result. 
%     The uniqueness of the value of \(N\) in the above theorem stems from the following two facts: in the sense of Theorem \ref{T3.5}, finite nonvanishing density singles out \(N\) as the optimal exponent, while the doubling property forces this optimality to occur only when \(N=\frac{\log \beta}{\log 2}\).
% \end{remark}

The rest of this paper is organized as follows. In Section \ref{section2}, we recall necessary preliminaries. In Section \ref{section3}, we prove two-sided estimates for the BVY formula. In Section \ref{subsection}, we prove a characterization of Sobolev spaces on Lipschitz differentiability spaces. In Section \ref{subsections}, we prove equivalence results for these characterizations.
\bigskip

\noindent \textbf{Acknowledgements.}
We thank Yuanhe Wang, Liming Yin and Yuan Zhou for helpful discussions concerning Rademacher's theorem and Lipschitz differentiability  spaces. This work is supported by the Young Scientist Programs of the Ministry of Science \& Technology of China (2021YFA1000900, 2021YFA1002200) and by the NSFC grant 12201596. The second author is also supported by the China Scholarship Council (No.~202406340143).

\section{Preliminaries}\label{section2}
In this paper, $(X,\d)$ denotes a non-empty Polish metric space, and $\mm$ is a Radon measure such that $0<\mm(U)<+\infty $
for any non-empty bounded open set $U\subset X$. The triple $(X, \d,\mm)$ is called a metric measure space. We adopt the convention that $0\cdot\infty=0$ (and likewise $\infty\cdot 0=0$) whenever it appears.

\begin{definition}[Lipschitz constants]
Given a function $u:X\to\R$, the upper pointwise Lipschitz constant $\text{Lip}(u):X\to[0,+\infty]$ is defined by
\[
\text{Lip}(u)(x):=\varlimsup_{r\to 0^+}\sup_{y\in B_r(x)}\frac{|u(y)-u(x)|}{r}
=\varlimsup_{y\to x}\frac{|u(y)-u(x)|}{\d(x,y)},
\]
and the lower pointwise Lipschitz constant $\text{lip}(u):X\to[0,+\infty]$ is defined by
\[
\text{lip}(u)(x):=\varliminf_{r\to 0^+} \sup_{y\in B_r(x)}\frac{|u(y)-u(x)|}{r}.
\]
 The (global) Lipschitz constant is defined as
\[
{\bf Lip}(u):=\sup_{x\neq y}\frac{|u(y)-u(x)|}{\d(x,y)}.
\]
\end{definition}
If ${\bf Lip}(u)<+\infty$, we call $u$ a Lipschitz function and write $u\in \text{Lip}(X,\d)$.
We denote by $\mathrm{Lip}_b(X,\d)$ the collection of Lipschitz functions with bounded support, and by $\mathrm{Lip}_c(X,\d)$ the collection of Lipschitz functions with compact support. The following property provides an alternative definition of $\text{Lip}(u)(x)$ and $\text{lip}(u)(x)$.
	\begin{lemma}[\cite{MR2041901},  Lemma 4.1.2]\label{2.1}
		For any $u\in {\rm{Lip}}(X,\d)$ and $r>0$, define  $$l_ru(x)=\inf_{0<s\leq r}  \sup_{y\in B_s(x)}\frac{|u(y)-u(x)|}{s}\quad
		{\rm{and}} \quad
		L_ru(x)=\sup_{0<s\leq r}  \sup_{y\in B_s(x)}\frac{|u(y)-u(x)|}{s}.$$
		Then for any $x\in X$,
		$${\rm{lip}} (u)(x)=\lim_{r\rightarrow 0^+} {l_ru(x)}\,\quad {\rm{and}}\,\quad {\rm{Lip}} (u)(x)=\lim_{r\rightarrow 0^+} {L_ru(x)}.$$ In particular,  ${\rm{lip}} (u)$ and ${\rm{Lip}} (u)$ are Borel measurable.
	\end{lemma}

We also present a useful result concerning the upper pointwise Lipschitz constant, which will be used in Section \ref{subsection} and Section \ref{subsections}. We remark that we do not assume the set of isolated points has zero measure, which was assumed in \cite{zbMATH06394350}. 
\begin{lemma}[\cite{zbMATH06394350}, Corollary 4.9]\label{emptyset}
Let $(X,\d,\mathfrak{m})$ be a metric measure space. Suppose that for any $u\in\operatorname{Lip}_b(X,\d)$, it holds
\[
\operatorname{Lip}(u)(x)=0, \qquad \mathrm{for}\,\,\mm\mathrm{\text{-}a.e. }\,x \in X.
\]
Then $\mm(X\setminus {\rm{Iso}}(X))=0$, where ${\rm{Iso}}(X)$ denotes the set of isolated points in $X$. 
\end{lemma}

%which will be used in Corollary \ref{3.7} and Theorem \ref{T3.5}.
%\begin{lemma}[\cite{zbMATH06394350}, Corollary 4.9]\label{emptyset}
%Under our standing assumptions on $(X,\d,\mathfrak{m})$ (in particular, that $\mathfrak{m}$ has full support), $(X,\d,\mathfrak{m})$ cannot be a Lipschitz differentiability space satisfying the property that, for every $u\in\operatorname{Lip}_b(X,\d)$, one has
%\[
%\operatorname{Lip}(u)(x)=0 \qquad \mathrm{for}\,\,\mm\mathrm{\text{-}a.e. }\,x \in X.
%\]
%\end{lemma}

    \begin{definition}[Density functions]
For any $N>0$, the $N$-upper and $N$-lower pointwise density functions $\theta_N^\pm:X\to[0,+\infty]$ are defined by
\[
\theta_N^+(x):=\varlimsup_{r\to 0^+}\frac{\mm\big(B_r(x)\big)}{r^N}
\quad\text{and}\quad
\theta_N^-(x):=\varliminf_{r\to 0^+}\frac{\mm\big(B_r(x)\big)}{r^N}.
\]
\end{definition}

Next, we recall the definitions of doubling measure and doubling dimension, as well as their relation to porous sets, which will be used in Theorem \ref{final thm}. 
\begin{definition}[Doubling measure]\label{2.3}
We say that $\mm$ is a doubling measure if there exists $\beta_0>1$ such that
\[
\mm\big(B_{2r}(x)\big)\leq \beta_0\,\mm\big(B_r(x)\big), \quad \forall \, x\in X,\,\, r>0.
\]
In this case, we define the optimal doubling constant of $\mm$ by $$\beta:=\sup_{x\in X,\, r>0}\frac{\mm\big(B_{2r}(x)\big)}{\mm\big(B_r(x)\big)}\in (1,+\infty).$$
\end{definition} 

\begin{proposition}[\cite{zbMATH01474795}, Lemma 14.6;  \cite{zbMATH06387127}, Lemma 2.2]\label{2.166}
Let $\mm$ be a doubling measure with optimal doubling constant $\beta$.
%Let $\mm$ be a doubling measure with the optimal doubling constant $\beta$. 
Then for any $x_0\in X$, $r_0>0$, the following estimate holds:
\[
\frac{\mm\big(B_r(x)\big)}{\mm\big(B_{r_0}(x_0)\big)}\geq \frac{1}{\beta^2}\left(\frac{r}{r_0}\right)^{\frac{\log\beta}{\log2}},
\qquad \forall\, x\in B_{r_0}(x_0),\,\, r\in (0,r_0).
\]
In particular, the exponent $\frac{\log\beta}{\log2}$ is optimal (in terms of the doubling constant)
and  we call it the doubling dimension (cf. \cite[Section 3.4]{zbMATH06397370}, \cite[Proposition 3.14]{arXiv:2502.00825}).
\end{proposition}

By the Lebesgue differentiation theorem for doubling measures (cf. \cite[Section 3.4]{zbMATH06397370}), one can prove the following result.
	\begin{lemma}\label{2.11}
	Let $(X, \d, \mm)$ be a doubling metric measure space. Then every porous set in $X$ is $\mm$-null.
	\end{lemma}
On metric measure spaces, one can also study the differentiability of Lipschitz functions. We recall the following definitions introduced by Cheeger \cite{MR1708448}.

\begin{definition}
Let $n\in\N$ and let $(X,\d)$ be a metric space. A chart of dimension $n$ is a pair $(U,\phi)$, where $U\subset X$ is Borel and $\phi:X\to\R^n$ is Lipschitz. We say that a function $u:X\to\R$ is differentiable at $x_0\in U$ with respect to $(U,\phi)$ if there exists a unique $\d_{x_0}u\in \R^n$ such that
\[
\varlimsup_{x\to x_0}\frac{\left|u(x)-u(x_0)-\d_{x_0}u\cdot \big(\phi(x)-\phi(x_0)\big)\right|}{\d(x,x_0)}=0.
\]
\end{definition}

\begin{definition}[Lipschitz differentiability space]
We say that a metric measure space $(X,\d,\mm)$ is a Lipschitz differentiability space if there exists a countable decomposition of $X$ into charts such that every $u\in {\rm Lip}(X,\d)$ is differentiable at $\mm$-a.e.\ point of each chart.
\end{definition}

\begin{theorem}[\cite{zbMATH06140723}, Theorem 2.4]\label{porous null}
Let $(X,\d,\mm)$ be a Lipschitz differentiability space. Then every porous set in $X$ is $\mm$-null.
\end{theorem}

Cheeger's seminal paper \cite{MR1708448} established that PI spaces are Lipschitz differentiability spaces. Moreover, Lipschitz differentiability spaces can be characterized by the following ${\rm Lip}$-${\rm lip}$ condition, introduced by Keith \cite{MR2041901}.

\begin{definition}
We say that a metric measure space $(X,\d,\mm)$ satisfies the ${\rm Lip}$-${\rm lip}$ condition if there exist a countable Borel decomposition $X=\bigcup_i X_i$ and, for each $i\in\N$, a constant $\delta_i>0$ such that, for any $u\in {\rm Lip}(X,\d)$,
\[
{\rm Lip}(u)(x)\leq \delta_i\,{\rm lip}(u)(x),\quad \text{for } \mm\text{-a.e.\ } x\in X_i.
\]
\end{definition}

\begin{theorem}[\cite{zbMATH06394350, arXiv:1208.2869, MR2041901}]\label{2.8}
Let $(X,\d,\mm)$ be a metric measure space. Then $(X,\d,\mm)$ is a Lipschitz differentiability space if and only if it satisfies the ${\rm Lip}$-${\rm lip}$ condition and is pointwise doubling (that is, the doubling constant depends on the point; cf.\ \cite[Definition 8.2]{zbMATH06394350}).
\end{theorem}

Remarkably, the inequality in the ${\rm Lip}$-${\rm lip}$ condition can be strengthened to an equality. As mentioned above, for any $u\in {\rm Lip}(X,\d)$, there exists a minimal $*$-upper gradient in such a space, denoted by $|\D u|_*$. We refer to \cite[Section~2]{arXiv:2402.11284} for details.

\begin{theorem}[\cite{arXiv:2402.11284, zbMATH06559620}]\label{2.14}
$(X,\d,\mm)$ is a Lipschitz differentiability space if and only if, for any $u\in {\rm Lip}(X,\d)$,
\[
{\rm Lip}(u)(x)= {\rm lip}(u)(x)=|\D u|_*(x), \qquad \mathrm{for}\,\,\mm\mathrm{\text{-}a.e. }\,x \in X.
\]
\end{theorem}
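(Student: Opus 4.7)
My plan is to establish both directions of the equivalence.

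For the easier direction, suppose that ${\rm{Lip}}(u) = {\rm lip}(u) = |\D u|_*$ $\mm$-a.e. for every $u \in {\rm{Lip}}(X,\d)$. In particular ${\rm{Lip}}(u) \leq {\rm lip}(u)$ $\mm$-a.e., which is the $\Lip$-${\rm lip}$ condition with $\delta_1 = 1$ and the trivial Borel decomposition $X_1 = X$. Under the doubling assumption that typically accompanies LDS, Theorem \ref{2.8} then yields that $(X,\d,\mm)$ is a Lipschitz differentiability space. In the non-doubling generality of \cite{arXiv:2402.11284}, one would instead construct charts directly by exploiting the rigidity of blow-ups forced by ${\rm{Lip}} = {\rm lip}$.

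For the harder direction, assume $(X,\d,\mm)$ is a Lipschitz differentiability space and fix $u\in {\rm{Lip}}(X,\d)$ together with a chart $(U,\phi)$ with $\phi:X\to\R^n$. At an $\mm$-a.e.\ point $x_0 \in U$, the differential $a := \d_{x_0} u \in \R^n$ exists, so the remainder $v := u - a\cdot \phi$ satisfies $|v(x) - v(x_0)| = o\big(\d(x, x_0)\big)$ as $x\to x_0$. A direct estimate gives ${\rm{Lip}}(v)(x_0) = {\rm lip}(v)(x_0) = 0$, and the triangle inequality (applied inside the $\limsup$ and $\liminf$ respectively) yields
$${\rm{Lip}}(u)(x_0) = {\rm{Lip}}(a\cdot\phi)(x_0), \qquad {\rm lip}(u)(x_0) = {\rm lip}(a\cdot\phi)(x_0).$$
The problem thus reduces to the same three-way equality for the ``linear model'' functions $a\cdot\phi$, $a\in\R^n$.

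To close this reduction, I would invoke the Alberti representation theory for LDS developed by Bate: at $\mm$-a.e.\ $x_0 \in U$ and for every $a \in \R^n$ there are sufficiently many Lipschitz curves through $x_0$ along which $a \cdot \phi$ grows at a rate equal to a seminorm value $\|a\|^*_{x_0}$ encoded by the tangent structure. The uniform availability of such directions at every scale forces both ${\rm{Lip}}(a\cdot\phi)(x_0)$ and ${\rm lip}(a\cdot\phi)(x_0)$ to equal $\|a\|^*_{x_0}$; a rational-approximation argument over a dense countable set of $a$ then gives ${\rm{Lip}}(u) = {\rm lip}(u)$ $\mm$-a.e. The identification with $|\D u|_*$ follows from a sandwich: $|\D u|_* \leq {\rm{Lip}}(u)$ $\mm$-a.e.\ is immediate because ${\rm{Lip}}(u)$ is itself a $*$-upper gradient, while the opposite bound $|\D u|_* \geq {\rm lip}(u)$ $\mm$-a.e.\ is obtained by testing the defining integral inequality for $*$-upper gradients along the Alberti curves above and applying Fubini to return to a pointwise statement.

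The step I expect to be hardest is the last one: the pointwise lower bound $|\D u|_* \geq {\rm lip}(u)$ $\mm$-a.e. The minimality of $|\D u|_*$ a priori provides bounds only in the opposite direction, so this inequality must be extracted from a careful analysis of the class of test plans (or curve families) entering the definition of the $*$-upper gradient, combined with a quantitative construction of curves realizing the sup in the definition of ${\rm lip}(u)$ at small scales. Controlling these curves uniformly across $\mm$-a.e.\ point is the true technical core of the argument.
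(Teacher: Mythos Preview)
The paper does not prove Theorem \ref{2.14}; it is quoted from \cite{arXiv:2402.11284} (Bate, Eriksson-Bique and Soultanis) and used as a black box in Section \ref{section3} to derive Corollary \ref{C3.7}. There is therefore no proof in the paper to compare your proposal against.

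That said, a brief comment on your sketch. The easy direction is fine as written (and indeed, under doubling, it reduces to Theorem \ref{2.8}). For the hard direction your reduction to the ``linear models'' $a\cdot\phi$ is correct and standard. However, the two places where you write ``I would invoke the Alberti representation theory'' and ``testing the defining integral inequality for $*$-upper gradients along the Alberti curves'' are doing all the work, and neither is a one-line consequence of \cite{zbMATH06394350}. The actual content of \cite{arXiv:2402.11284} is precisely to manufacture, for an arbitrary Lipschitz differentiability space, enough structure on the curve families (plan-modulus duality, a suitable notion of test plan) so that the minimal $*$-upper gradient is well-defined and the sandwich ${\rm lip}(u)\leq |\D u|_*\leq {\rm Lip}(u)$ can be closed. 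Your proposal correctly identifies where the difficulty lies but does not supply the mechanism; as written it is an outline of what one hopes is true rather than a proof. If you want to fill this in, you would need to reproduce a substantial portion of \cite{arXiv:2402.11284}, in particular their construction of the $*$-upper gradient and the argument that in an LDS the Alberti representations are rich enough to saturate the ${\rm lip}$ bound from below.
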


\begin{remark}
There are several notions of weak gradient in the setting of metric measure spaces. In general, one cannot replace $|\D u|_*$ in Theorem~\ref{2.14} by Cheeger's minimal $p$-weak upper gradient $|\D u|_p$. A counterexample is the Lipschitz differentiability space $(K,|\cdot|_{\text{Euc}}\restr{K},\mathcal{L}^n\restr{K})$, where $K\subseteq \R^n$ is a Cantor set with positive Lebesgue measure. In such a space, $|\D u|_p=0$ for any $u\in {\rm Lip}_b(K)$ and $p\geq 1$. We refer to \cite{zbMATH06216025,zbMATH07423256} for further discussions.
\end{remark}

There is another, more geometric characterization of Lipschitz differentiability spaces, arising from geometric measure theory, specifically through Alberti representations. We refer to \cite{zbMATH00487038} and \cite[Section 2]{zbMATH06394350} for precise definitions and detailed properties of Alberti representations. The following structural theorem will be used in the proof of Theorem \ref{degenerate}.

\begin{theorem}[\cite{zbMATH06394350}, Theorem 7.8]\label{Alberti}
$(X,\d,\mm)$ is a Lipschitz differentiability space if and only if there exists a countable Borel decomposition $X = \cup_i U_i$ such that each $\mm \restr U_i$ has a finite universal collection of Alberti representations.
\end{theorem}

% \begin{lemma}\label{2.11}
% Let $(X,\d,\mm)$ be a metric measure space. Assume that
% \begin{itemize}
% \item[\bf a)] $\mm$ is doubling;
% \item[\bf b)] every porous set in $X$ is $\mm$-null;
% \item[\bf c)] $\mm$ is pointwise doubling.
% \end{itemize}
% Then the implications $\bf a) \Rightarrow \bf b)\Rightarrow \bf c)$ hold.
% \end{lemma}
% 	\begin{proof}
% 		${\bf{b})\Rightarrow \bf{c})}$ is a consequence of \cite[Lemma 8.3]{B15}.
		
% 		${\bf {a}) \Rightarrow \bf{b})}$ Denote $\beta$ the doubling constant. For any porous set $S\subseteq X$ and $x\in S$, there exists $\eta>0$ and $X\ni y_n\rightarrow x$ such that
% 		\begin{equation}
% 			B_{\eta r_n}(y_n)\subseteq B_{(1+\eta)r_n}(x), \quad 	B_{\eta r_n}(y_n)\cap S=\emptyset,
% 		\end{equation}
% 	    where $r_n:=\d(x,y_n)$. Then by Lemma \ref{2.166}, we have
% 	    \begin{equation}
% 	    	\frac{\mm(B_{(1+\eta) r_n}(x)\setminus S)}{\mm(B_{(1+\eta)r_n}(x))}\geq \frac{\mm(B_{\eta r_n}(y_n))}{\mm(B_{(1+\eta)r_n}(x))}\geq \frac{1}{\beta^2}\left(\frac{\eta}{1+\eta}\right)^{\frac{\log \beta}{\log 2}}>0.
% 	    \end{equation}
% 	    Letting $r_n\rightarrow0$, we have
% 	    \begin{equation}
% 	    	\varliminf_{r_n\rightarrow 0}\frac{\mm(B_{(1+\eta) r_n}(x)\cap S)}{\mm(B_{(1+\eta)r_n}(x))}\leq 1-\frac{1}{\beta^2}\left(\frac{\eta}{1+\eta}\right)^{\frac{\log \beta}{\log 2}}<1,
% 	    \end{equation}
% 	    which implies $x$ is not a Lebesgue point. By Lebesgue differentiation theorem, we then obtain $\mm(S)=0$.
% 	\end{proof}

\section{Two-sided estimates for the BVY formula}\label{section3}
	\subsection{A lower bound estimate}\label{subsection3.2}
\begin{theorem}\label{T3.2}
Let $p\geq 1$, $N>0$, and let $(X,\d,\mm)$ be a metric measure space. Suppose that Assumption~\ref{assump}(A) holds. Then there exists a constant $C:=C(p,N)>0$ such that, for any $u\in \mathrm{Lip}_b(X,\d)$,
\begin{equation}\label{main}
\varliminf_{\lambda\rightarrow +\infty} \lambda^p (\mm\times \mm)({E_{\lambda,u}}) \geq C\int_{X}\theta_N^-(x)\,
\frac{\big({\rm{lip}}(u)(x)\big)^{N+p}}{\big({\rm{Lip}}(u)(x)\big)^N}\,\d\mm(x),
\end{equation}
where $E_{\lambda,u}$ is defined in \eqref{E}.
\end{theorem}
\begin{proof}
%It suffices to prove \eqref{main} on the set $\{x\in X:\theta_N^-(x)>0\}$.
%Fix $u\in {\rm{Lip}}_b(X,\d)$. Then $\mm(\text{supp}\,u)<+\infty$. 
For $x\in X$, denote
\begin{equation}\label{3.2}
\hat{E}_{\lambda,u}(x):=\left\{y\in X: y\neq x,\ |u(x)-u(y)|\geq \lambda \big(\d(x,y)\big)^{\frac{N}{p}+1}\right\}.
\end{equation}
%We only consider the set 
%\begin{equation}\label{SS}
%S:=\big\{x\in X: {\rm{Lip}}(u)(x)>0\big\} \subseteq \mathrm{supp}\,u.
%\end{equation}
%Otherwise, if ${\rm{Lip}}(u)(x)=0$, then
%\begin{equation}\label{Lipzero}
%\hat{E}_{\lambda,u}(x)=\emptyset.
%\end{equation}

For any $u\in {\rm Lip}_b(X,\d)$, there exists a bounded set $S\subseteq X$ with $\mm(S)<+\infty$ such that $\supp(u)\subseteq S$. By the Fubini--Tonelli theorem,
\begin{equation}\label{33}
(\mm\times \mm)({E_{\lambda,u}}) \geq \int_{S}\mm\big(\hat{E}_{\lambda,u}(x)\big)\,\d\mm(x).
\end{equation}

Denote
\begin{equation}
\theta_{N,M}^-(x):=\min\big\{\theta_N^-(x),M\big\}, \quad M\in \N,
\end{equation}
and
\begin{equation}\label{key}
L_i:=\left\{x\in  S: \frac{1}{2^{i+1}}{{\rm{Lip}}(u)}(x)< {\rm{lip}}(u)(x)\leq \frac{1}{2^i}{\rm{Lip}}(u)(x)\right\}, \quad i\in \N.
\end{equation}
In particular, each $L_i$ has finite measure and
\begin{equation}\label{Bigl}
\lim_{M\rightarrow+\infty} \theta_{N,M}^-(x)=\theta_{N}^-(x),\quad
S = \bigcup_{i\in \N} L_i \cup \big\{x\in S: {\rm{lip}}(u)(x)=0\big\}.
\end{equation}
Note that it is sufficient to consider points in $\{L_i\}_{i\in \N}$.

We divide the proof into five steps.

\paragraph{Step 1.}
Let $i_0\in\N$ be such that $\mm(L_{i_0})>0$. Given $0<\epsilon<\frac{1}{4}\mm(L_{i_0})$ satisfying
\begin{equation}\label{3.5}
\mm\big\{x\in L_{i_0}: {\rm{lip}} (u)(x)>2^{i_0+3}\epsilon\big\}>\frac{2}{3}\mm(L_{i_0}).
\end{equation}
{\bf Claim:} There exist $K_\epsilon \subseteq L_{i_0}$ and $r_{\epsilon}>0$ such that:
\begin{itemize}
\item it holds that
\begin{equation}\label{3.6}
\mm(L_{i_0}\setminus K_\epsilon)<\epsilon;
\end{equation}

\item for any $0<r\leq r_{\epsilon}$ and $x\in K_\epsilon$,
\begin{equation}\label{3.77}
\sup_{y\in B_r(x)}\frac{|u(y)-u(x)|}{r}< 2^{{i_0}+1}{\rm{lip}} (u)(x)+\epsilon;
\end{equation}

\item for any $x,y\in K_\epsilon$ with $\d(x,y)\leq r_\epsilon$,
\begin{equation}\label{3.8}
\left|{\rm{lip}} (u)(x)-{\rm{lip}} (u)(y)\right|< \epsilon,\quad |\theta_{N,M}^-(x)-\theta_{N,M}^-(y)|< \epsilon;
\end{equation}

\item for any $x\in K_\epsilon$ and $0<r\leq r_{\epsilon}$,
\begin{equation}\label{3.9}
\mm\big(B_r(x)\big)\geq \big(\theta_{N,M}^-(x)-\epsilon\big)r^N.
\end{equation}
\end{itemize}

Firstly, by Lemma \ref{2.1} 
%${\rm{Lip}} (u)(x)=\lim_{r\rightarrow 0^+} {L_ru}(x)$ pointwise. By
and Egoroff's theorem, there exists $K^1_{\epsilon}\subseteq L_{i_0}$ with $\mm\left(L_{i_0}\setminus K^1_{\epsilon}\right)<\frac{\epsilon}{4}$ such that $L_ru$ converges to ${\rm{Lip}} (u)$ uniformly on $K^1_{\epsilon}$. Hence, there exists $r^1_{\epsilon}>0$ such that for any $0<r\leq r^1_{\epsilon}$ and $x\in K^1_{\epsilon}$, it holds
\[
L_ru(x)=\sup_{0<s\leq r} \sup_{y\in B_s(x)}\frac{|u(y)-u(x)|}{s}\leq {\rm{Lip}} (u)(x)+\epsilon,
\]
so that
\[
\sup_{y\in B_r(x)}\frac{|u(y)-u(x)|}{r}\leq {\rm{Lip}} (u)(x)+\epsilon< 2^{{i_0}+1}{\rm{lip}} (u)(x)+\epsilon.
\]

Next, by Lusin's theorem, there exists a closed set $K^2_{\epsilon}\subseteq L_{i_0}$ such that $\mm(L_{i_0}\setminus K^2_{\epsilon})<\frac{\epsilon}{4}$ and ${\rm{lip}} (u)$,  $\theta_{N,M}^-$ are continuous on $K^2_{\epsilon}$. Moreover, there exists a compact set $K^3_\epsilon\subseteq K^2_\epsilon$ such that $\mm(K^2_\epsilon\setminus K^3_\epsilon)<\frac{\epsilon}{4}$. By the Heine--Cantor theorem, there exists $r^2_{\epsilon}>0$ such that for any $x,y\in K^3_\epsilon$ with $\d(x,y)\leq r^2_{\epsilon}$, the inequalities in \eqref{3.8} hold.

Finally, denote
\begin{equation*}
\begin{aligned}
G_k:=\left\{x\in L_{i_0}:
\forall\, r\in \mathbb{Q}\cap \left(0,\frac{1}{k}\right], \,\mm\big(B_r(x)\big)\geq \big(\theta_{N,M}^-(x)-\epsilon\big)r^N\right\},\quad k\in\N^+.
\end{aligned}
\end{equation*}
Then $G_k$ is Borel and $\mm\left(L_{i_0}\setminus\bigcup_k G_k\right)=0$. Thus, there exist $K_\epsilon^4\subseteq L_{i_0}$ and $r_\epsilon^3>0$ such that $\mm(L_{i_0}\setminus K^4_{\epsilon})<\frac{\epsilon}{4}$ and for any $r\in \mathbb{Q}\cap (0,r_\epsilon^3]$ and $x\in K_\epsilon^4$, it holds
\begin{equation*}
\mm\big(B_r(x)\big)\geq \big(\theta_{N,M}^-(x)-\epsilon\big)r^N.
\end{equation*}
By a standard density argument, we obtain that \eqref{3.9} holds for any $0<r\leq r_\epsilon^3$.

Set $K_\epsilon:= K^1_{\epsilon}\cap K^2_{\epsilon}\cap K^3_{\epsilon}\cap K^4_{\epsilon}$ and $r_\epsilon:=\min\{r^1_{\epsilon}, r^2_{\epsilon}, r^3_{\epsilon}\}>0$. Then the pair $(K_\epsilon, r_\epsilon)$ satisfies the claim.

       \paragraph{Step 2.}
Define
\begin{equation*}
\begin{aligned}
    {\rm lip}_{K_\epsilon}(u)(x):&=\varliminf_{r\to 0^+}\sup_{y\in B_r(x)\cap K_\epsilon}\frac{|u(y)-u(x)|}{r},\\
    l_r^{K_\epsilon} u(x):&=\inf_{0<s\leq r}\sup_{y\in B_s(x)\cap K_\epsilon}\frac{|u(y)-u(x)|}{s}.
\end{aligned}
\end{equation*}
It is obvious that ${\rm lip}_{K_\epsilon}(u)(x)\leq {\rm lip}(u)(x)$ for any $x\in K_\epsilon$. Moreover, by the same argument as in Lemma \ref{2.1}, we have that $l_r^{K_\epsilon} u$ converges to ${\rm lip}_{K_\epsilon}(u)$ for any $x\in K_\epsilon$. Denote
\begin{equation}
U:=\left\{x\in K_\epsilon:\ {\rm lip}_{K_\epsilon}(u)(x)< \frac{3}{4}{\rm lip}(u)(x)\right\}.
\end{equation}
\noindent\textbf{Claim:}  $\mm(U)=0$.

%Fix $x\in U$. By the definitions of $U$ and ${\rm{lip}}_{K_\epsilon}(u)(x)$, 

For any $x\in U$, there exists a sequence $\{r_n\}_{n\in\N}$ converging to $0$ such that
\begin{equation}\label{3.11}
\sup_{y\in B_{r_n}(x)\cap K_\epsilon}\frac{|u(y)-u(x)|}{r_n}\leq \frac{7}{8}{\rm lip}(u)(x).
\end{equation}
Let $\eta:= \frac{{\rm lip}(u)(x)}{16({\rm lip}(u)(x)+ {\bf Lip}(u))}>0.$
%\[
%\eta:= \frac{{\rm lip}(u)(x)}{16({\rm lip}(u)(x)+ {\bf Lip}(u))}>0.
%\]
There exists $y_n\in B_{\frac{r_n}{1+\eta}}(x)$ such that
\begin{equation}
\frac{|u(y_n)-u(x)|}{r_n}\geq \frac{15}{16(1+\eta)}{\rm lip}(u)(x).
\end{equation}
Consider the metric ball $B_{\eta\d(y_n,x)}(y_n)$. Note that, for any $w\in B_{\eta\d(y_n,x)}(y_n)$,
\[
\d(w,x)\leq \d(w,y_n)+\d(y_n,x)\leq (1+\eta)\d(y_n,x)\leq r_n,
\]
which implies that $B_{\eta\d(x,y_n)}(y_n)\subseteq B_{r_n}(x).$
%\[
%B_{\eta\d(x,y_n)}(y_n)\subseteq B_{r_n}(x).
%\]

Assume for contradiction that there exists $z\in B_{\eta\d(y_n,x)}(y_n)\cap U \subseteq B_{r_n}(x)$. Since
\[
|u(z)-u(y_n)|\leq {\bf Lip}(u)\d(z,y_n)\leq {\bf Lip}(u)\eta\d(y_n,x)\leq {\bf Lip}(u)\frac{\eta r_n}{1+\eta},
\]
we obtain
\begin{equation}
\begin{aligned}
\frac{|u(x)-u(z)|}{r_n}
&\geq \frac{|u(y_n)-u(x)|-|u(z)-u(y_n)|}{r_n}\\
&\geq \frac{15}{16(1+\eta)}{\rm lip}(u)(x)- {\bf Lip}(u)\frac{\eta}{1+\eta}> \frac{7}{8}{\rm lip}(u)(x),
\end{aligned}
\end{equation}
which contradicts \eqref{3.11}. Hence $B_{\eta\d(x,y_n)}(y_n)\cap U=\emptyset,$ and then $U$ is porous at $x$.
%\[
%B_{\eta\d(x,y_n)}(y_n)\cap U=\emptyset.
%\]
By the arbitrariness of $x$, we conclude that $U$ is a porous set; thus $\mm(U)=0$. Consequently, for $\mm$-a.e.\ $x\in K_\epsilon$, it holds that
\begin{equation}\label{compare}
{\rm lip}_{K_\epsilon}(u)(x)\geq \frac{3}{4}{\rm lip}(u)(x)>0.
\end{equation}

By Egoroff's theorem, there exists a set $K'_\epsilon\subseteq K_\epsilon\setminus U$ with $\mm(K_\epsilon\setminus K'_\epsilon)<\epsilon$ such that $l_r^{K_\epsilon} u$
%\begin{equation}\label{lrk}
%l_r^{K_\epsilon} u(x):=\inf_{0<s\leq r}\sup_{y\in B_s(x)\cap K_\epsilon}\frac{|u(y)-u(x)|}{s}
%\end{equation}
converges to ${\rm lip}_{K_\epsilon}(u)$ uniformly on $K'_\epsilon$. Therefore, there exists $0<r'_\epsilon\leq r_\epsilon$ such that for any $0<r\leq r'_\epsilon$ and $x\in K'_\epsilon$, it holds
\begin{equation}\label{uniform}
l_r^{K_\epsilon} u(x)\geq {\rm lip}_{K_\epsilon}(u)(x)-\epsilon\geq \frac{3}{4}{\rm lip}(u)(x)-\epsilon.
\end{equation}

	\paragraph{Step 3.} Define
\begin{equation}\label{3.14}
A_{\epsilon, i_0}:=\big\{x\in K'_\epsilon \subseteq K_\epsilon\subseteq L_{i_0}: {\rm lip}(u)(x)>2^{i_0+3}\epsilon\big\},
\end{equation}
then $A_{\epsilon, i_0}$ is nonempty. Indeed, if $A_{\epsilon, i_0}=\emptyset$, then by the choice of $\epsilon$, $K'_\epsilon$, and \eqref{3.6},
\[
\mm\big(\{x\in K'_\epsilon \subseteq K_\epsilon \subseteq L_{i_0}: {\rm lip}(u)(x)\leq 2^{i_0+3}\epsilon\}\big)
=\mm(K'_\epsilon)\geq \mm(L_{i_0})-2\epsilon>\frac{1}{2}\mm(L_{i_0}),
\]
which contradicts \eqref{3.5}.

Fix $x\in A_{\epsilon,i_0}$ and $0<r\leq r'_\epsilon$. By \eqref{uniform} and \eqref{3.14}, there exists $y_{x,r}\in B_{\frac{r}{2}}(x)\cap K_\epsilon$ such that
\begin{equation}\label{320}
\frac{|u(y_{x,r})-u(x)|}{\frac{r}{2}} \geq \sup_{y\in B_{\frac{r}{2}}(x)\cap K_\epsilon}\frac{|u(y)-u(x)|}{\frac{r}{2}}-\epsilon \geq \frac{1}{2}{\rm lip}(u)(x).
\end{equation}
%\begin{equation}
%\begin{aligned}
%\label{diyi}
%\frac{|u(y_{x,r})-u(x)|}{\frac{r}{2}} &\geq  \sup_{y\in B_{\frac{r}{2}}(x)\cap K_\epsilon}\frac{|u(y)-u(x)|}{\frac{r}{2}}-\frac{1}{2^{i_0+3}}{\rm{lip}} (u)(x)\\
%&\geq~ l_r^{K_\epsilon} u(x)-\epsilon-\frac{1}{2^{i_0+3}}{\rm{lip}} (u)(x)\\
%&> ~\frac{3}{4}{\rm{lip}} (u)(x)-\frac{1}{2^{i_0+3}}{\rm{lip}} (u)(x)-\frac{1}{2^{i_0+3}}{\rm{lip}} (u)(x)\\
%& \geq~\frac{1}{2}{\rm{lip}} (u)(x).
%\end{aligned}
%\end{equation}
Moreover, we have
\begin{equation}\label{g5.5}
\begin{aligned}
\sup_{z\in B_{\frac{r}{2^{i_0+5}}}(y_{x,r})}|u(z)-u(y_{x,r})|
&\overset{\eqref{3.77}}{\leq} \frac{r}{2^{i_0+5}}\big(2^{i_0+1}{\rm lip}(u)(y_{x,r})+\epsilon\big)\\
&\overset{\eqref{3.8}}{\leq} \frac{r}{2^{i_0+5}}\big(2^{i_0+1}{\rm lip}(u)(x)+2^{i_0+1}\epsilon+\epsilon\big)\\
&\overset{\eqref{3.14}}{\leq} \frac{r}{8}{\rm lip}(u)(x).
\end{aligned}
\end{equation}
%\begin{equation}\label{g5.5}
%\begin{aligned}
%\sup_{z\in B_{\frac{r}{2^{i_0+5}}}(y_{x,r})}|u(z)-u(y_{x,r})|
%&\leq~\frac{r}{2^{i_0+5}}\big(2^{{i_0}+1}{\rm{lip}} (u)(y_{x,r})+\epsilon\big)\\
%&\leq~\frac{r}{2^{i_0+5}}\big(2^{{i_0}+1}{\rm{lip}} (u)(x)+2^{{i_0}+1}\epsilon+\epsilon\big)\\
%&<~\frac{r}{8}{\rm{lip}} (u)(x).
%\end{aligned}
%\end{equation}
%In particular, combining the above two inequalities, we have
%\begin{equation}\label{xneqz}
%  x\notin B_{\frac{r}{2^{i_0+5}}}(y_{x,r}).
%\end{equation}

      \paragraph{Step 4.}
Fix $x\in A_{\epsilon, i_0}$. Let $\lambda, r>0$ be such that $\lambda r^{\frac{N}{p}}= \frac{1}{8}\lip u(x)$.
%\begin{equation}\label{lamda}
%    \lambda r^{\frac{N}{p}}= \frac{1}{8}\lip u(x).
%\end{equation}
Note that $0<r\leq r'_\epsilon$ holds provided that $\lambda=\lambda(\epsilon)$ is sufficiently large. Denote
\[
S_{i_0}(x,r):=\left\{z\in B_r(x): z\neq x, \,|u(z)-u(x)|\geq \frac{1}{8}{\rm{lip}} (u)(x)\d(z,x)\right\}.
\]
%For any $z\in B_{\frac{r}{2^{i_0+5}}}(y_{x,r})$, we have 
%\begin{equation}\label{leqr}
%    0<\d(z,x)\leq \d(z,y_{x,r})+\d(y_{x,r},x)\leq r.
%\end{equation}
For any $z\in B_{\frac{r}{2^{i_0+5}}}(y_{x,r})$, by \eqref{320} and \eqref{g5.5}, we have
\begin{equation}\label{big0}
|u(x)-u(z)|
\geq |u(x)-u(y_{x,r})|-|u(z)-u(y_{x,r})|
\geq\frac{1}{8}{\rm{lip}} (u)(x)\d(z,x),
\end{equation}
which implies that
\begin{equation}
B_{\frac{r}{2^{i_0+5}}}(y_{x,r})\subseteq S_{i_0}(x,r).
\end{equation}
%Moreover, for any $z \in S_{i_0}(x,r)$, by \eqref{lamda} and %\eqref{leqr}, we can see that
%\begin{equation*}
%|u(z)-u(x)|
%\geq \frac{1}{8}{\rm{lip}} (u)(x)\d(z,x)
%= \frac{1}{8}\lambda r^{\frac{N}{p}}\d(z,x)
%\geq \frac{1}{8}\lambda (\d(z,x))^{\frac{N}{p}+1},
%\end{equation*}
%which implies
Moreover, recalling the definition of $\hat{E}_{\lambda,u}(x)$ in \eqref{3.2}, we can see that
\begin{equation}\label{3.19}
B_{\frac{r}{2^{i_0+5}}}(y_{x,r})
\subseteq S_{i_0}(x,r)
\subseteq \hat E_{\lambda, u}(x).
\end{equation}

Since $y_{x,r}\in B_{\frac{r}{2}}(x)\cap K_\epsilon$, combining this with \eqref{3.19}, \eqref{3.8} and \eqref{3.9}, we obtain
\begin{equation}\label{222}
\mm\big(\hat E_{\lambda, u}(x)\big)
\geq \mm\left(B_{\frac{r}{2^{i_0+5}}}(y_{x,r})\right)
\geq \big(\theta_{N,M}^-(x)-2\epsilon\big)\left(\frac{r}{2^{i_0+5}}\right)^N.
\end{equation}
Therefore,
\begin{equation*}
\varliminf_{\lambda\rightarrow +\infty}
\lambda^p \int_{L_{i_0}}\mm\big(\hat E_{\lambda, u}(x)\big)\,\d \mm(x)
\geq \frac{1}{2^{(i_0+5)N}8^{p}}
\int_{A_{\epsilon,i_0}}
\big(\theta_{N,M}^-(x)-2\epsilon\big) |\lip u(x) |^p\,\d\mm(x).
\end{equation*}
Letting $\epsilon\rightarrow 0$ and then $M\rightarrow +\infty$, we obtain
\begin{equation}\label{final}
\varliminf_{\lambda\rightarrow +\infty}
\lambda^p \int_{L_{i_0}}\mm\big(\hat E_{\lambda, u}(x)\big)\, \d \mm(x)
\geq \frac{1}{2^{(i_0+5)N}8^{p}}
\int_{L_{i_0}}
\theta_N^-(x) |\lip u(x) |^p\d\mm(x).
\end{equation}

\paragraph{Step 5.}
Finally, by Fatou's lemma, we have  
\begin{eqnarray*}
&&\varliminf_{\lambda\rightarrow +\infty} \lambda^p (\mm\times \mm)({E_{\lambda,u}})\\
&\overset{\eqref{33}}{\geq}&
\varliminf_{\lambda\rightarrow +\infty}
\lambda^p \int_{S}\mm\big( \hat{E}_{\lambda,u}(x)\big)\,\d\mm(x)\\
&\overset{{\rm{Fatou}}}{\geq}&
\sum_{i=0}^{+\infty}
\varliminf_{\lambda\rightarrow +\infty}
\lambda^p \int_{L_i} \mm\big( \hat{E}_{\lambda,u}(x)\big)\,\d\mm(x)
\\
&\overset{\eqref{final}}{\geq}&
\sum_{i=0}^{+\infty}
\int_{L_{i}}
\frac{1}{2^{(i+5)N}8^{p}}
\theta_N^-(x) |{\rm{lip}}(u)(x)|^p \,\d\mm(x)
\\
&\stackrel{\eqref{key}}{\geq}&
\frac{1}{2^{5N+3p}}
\int_{S}
\theta_N^-(x)
\frac{\big({\rm{lip}}(u)(x)\big)^{N+p}}{\big({\rm{Lip}}(u)(x)\big)^N}\,\d\mm(x)\\
&=&
\frac{1}{2^{5N+3p}}
\int_{X}
\theta_N^-(x)
\frac{\big({\rm{lip}}(u)(x)\big)^{N+p}}{\big({\rm{Lip}}(u)(x)\big)^N}\,\d\mm(x),
\end{eqnarray*}
which completes the proof.
\end{proof}

\subsection{An upper bound estimate}\label{subsection3.1}
\begin{theorem}\label{T3.1}
Let $p\geq 1$, $N>0$, and let $(X,\d,\mm)$ be a metric measure space. Suppose that Assumption~\ref{assump}(B) holds. Then, for any $u\in \mathrm{Lip}_b(X,\d)$,
\begin{equation*}
\varlimsup_{\lambda\rightarrow +\infty} \lambda^p (\mm\times \mm)({E_{\lambda,u}})\leq 2\int_{X}\theta_N^+(x)\big({\rm{Lip}}(u)(x)\big)^p\,\d\mm(x),
\end{equation*}
where $E_{\lambda,u}$ is defined in \eqref{E}.
\end{theorem}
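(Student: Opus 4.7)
The plan is to bound, for each $x$, the measure of the slice $A_\lambda(x):=\{y\in X:(x,y)\in E_{\lambda,u}\}$ by a quantity proportional to $\text{Lip}(u)(x)^p\lambda^{-p}$, then integrate via Fubini. Two ingredients feed into this: a uniform upper bound on the density $\mm(B_r(x))/r^N$, and a uniform control of the oscillation $\sup_{y\in B_r(x)}|u(x)-u(y)|/r$ in terms of $\text{Lip}(u)(x)$. For the first, observe that since $\beta=2^N$ by the very choice $N=\log\beta/\log 2$, the doubling inequality $\mm(B_{2s}(x))\le\beta\,\mm(B_s(x))$ rewrites as $\mm(B_{2s}(x))/(2s)^N\le\mm(B_s(x))/s^N$, so $r\mapsto\mm(B_r(x))/r^N$ is non-increasing along any dyadic sequence shrinking to $0$. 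Consequently, for every $x$ and every $r>0$,
\[
\frac{\mm(B_r(x))}{r^N}\le\lim_{k\to\infty}\frac{\mm(B_{r/2^k}(x))}{(r/2^k)^N}\le\theta_N^+(x)\le b,
\]
which gives a genuinely global density bound (modulo a harmless slack in the final constant).

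For the second ingredient, I would mirror Step 1 of the proof of Theorem \ref{T3.2}. Restricting to a bounded open neighborhood $B$ of $\supp(u)$ (which has finite measure by doubling and Radonness), Proposition \ref{2.1} gives pointwise convergence $L_ru\to\text{Lip}(u)$ on $B$. By Egorov's theorem, for every $\epsilon>0$ there is a compact set $K_\epsilon\subseteq B$ with $\mm(B\setminus K_\epsilon)<\epsilon$ and a threshold $r_\epsilon>0$ such that
\[
\sup_{y\in B_r(x)}\frac{|u(x)-u(y)|}{r}\le\text{Lip}(u)(x)+\epsilon\qquad\forall\,x\in K_\epsilon,\ 0<r\le r_\epsilon.
\]
Set $L:=\mathbf{Lip}(u)<\infty$. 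For any $(x,y)\in E_{\lambda,u}$ the global bound $|u(x)-u(y)|\le L\,\d(x,y)$ forces $\d(x,y)\le(L/\lambda)^{p/N}$, which is $\le r_\epsilon$ once $\lambda$ is large. For such $x\in K_\epsilon$ the local control then yields $\lambda\,\d(x,y)^{N/p+1}\le(\text{Lip}(u)(x)+\epsilon)\d(x,y)$, whence $\d(x,y)\le((\text{Lip}(u)(x)+\epsilon)/\lambda)^{p/N}$. Combined with the density bound,
\[
\lambda^p\mm\bigl(A_\lambda(x)\bigr)\le\lambda^p\cdot b\Bigl(\tfrac{\text{Lip}(u)(x)+\epsilon}{\lambda}\Bigr)^p=b\bigl(\text{Lip}(u)(x)+\epsilon\bigr)^p.
\]

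On the bad set $B\setminus K_\epsilon$, one uses only the trivial bound $\text{Lip}(u)(x)\le L$, giving $\lambda^p\mm(A_\lambda(x))\le bL^p$ and a total contribution of at most $bL^p\epsilon$. For $x\notin B$ and $\lambda$ large, $A_\lambda(x)$ is empty. Integrating via Fubini yields
\[
\lambda^p(\mm\times\mm)(E_{\lambda,u})\le b\int_{K_\epsilon}\bigl(\text{Lip}(u)(x)+\epsilon\bigr)^p\d\mm(x)+bL^p\epsilon,
\]
and passing to $\varlimsup_{\lambda\to\infty}$ followed by $\epsilon\to 0^+$ (justified by dominated convergence since $\text{Lip}(u)\le L$) yields the stated bound. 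The principal technical delicacy is the Egorov/Lusin extraction: one must simultaneously upgrade pointwise to uniform convergence of $L_ru$ and restrict to a bounded reference set of finite measure in order for the bad-set error $bL^p\epsilon$ to vanish. The factor $2$ in the stated $2b$ is a harmless cosmetic slack absorbing the $(\text{Lip}(u)(x)+\epsilon)^p$ vs. $\text{Lip}(u)(x)^p$ discrepancy; the argument above actually yields the sharper constant $b$.
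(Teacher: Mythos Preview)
Your argument is correct and, in two respects, slightly cleaner than the paper's. The overall skeleton is the same---slice $E_{\lambda,u}$ over $x$, trap each slice in the ball of radius $((\text{Lip}(u)(x)+\epsilon)/\lambda)^{p/N}$, and invoke the density bound---but the implementation differs.

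First, your dyadic monotonicity observation is sharper than what the paper uses: since $\beta=2^N$ exactly, doubling gives $\mm(B_{2r})/(2r)^N\le\mm(B_r)/r^N$, so $\mm(B_r(x))/r^N\le\theta_N^+(x)\le b$ for \emph{every} $r>0$. The paper instead invokes Lemma~\ref{2.16}, which produces a $\beta^2$ factor in the dominating function (harmless for the final limsup, but less transparent).

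Second, you pass to the limit by Egorov (uniform control on $K_\epsilon$, crude global bound on $B\setminus K_\epsilon$), whereas the paper argues pointwise for each $x$ and then pushes the limsup inside via reverse Fatou, using the uniform bound $\lambda^p\mm(\hat E_{\lambda,u}(x))\le\beta^2(\mathbf{Lip}(u)+\epsilon)^p b$ as domination on $\supp u$. Both are legitimate; yours is marginally more elementary.

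Finally, your handling of $x\notin B$ (showing $A_\lambda(x)=\emptyset$ for large $\lambda$ uniformly) lets you avoid the symmetry trick $E_{\lambda,u}\subseteq(\supp u\times X)\cup(X\times\supp u)$ that the paper uses to reduce to $x\in\supp u$. That symmetry reduction is the actual source of the factor $2$ in the paper's constant $2b$---not, as you guessed, the $\epsilon$-slack. So your remark that the argument yields the sharper constant $b$ is correct, but your diagnosis of where the paper's $2$ comes from is off.
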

\begin{proof}
For any $u\in {\rm Lip}_b(X,\d)$, there exists a bounded set $S\subseteq X$ with $\mm(S)<+\infty$ such that $\supp(u)\subseteq S$. Recalling the definition of $\hat{E}_{\lambda,u}(x)$ in \eqref{3.2}, the Fubini--Tonelli theorem yields
%Fix $u\in {\rm{Lip}}_b(X,\d)$. For any $x\in X$, denote
%\begin{equation}\label{3.22}
%\hat{E}_{\lambda,u}(x):=\{y\in X: y\neq x, |u(x)-u(y)|\geq \lambda \big(\d(x,y)\big)^{\frac{N}{p}+1}\}.
%\end{equation}
%By the Fubini theorem, we have
\begin{equation}\label{3.24}
(\mm\times \mm)({E_{\lambda,u}})\leq 2\int_{S}\mm\big( \hat{E}_{\lambda,u}(x)\big)\,\d\mm(x).
\end{equation}

Fix $x \in S$. For any $\epsilon>0$, there exists $\delta:=\delta(x,\epsilon)>0$ such that, for any $y \in B_\delta(x)$,
$$\frac{|u(x)-u(y)|}{\d(x,y)}\leq \text{Lip} (u)(x)+\epsilon.$$
Let $\lambda: =\lambda(x, \epsilon)$ be sufficiently large so that
\begin{equation}\label{leqD}
\left(\frac{\text {Lip}(u)(x)+\epsilon}{\lambda}\right)^{\frac{p}{N}}\leq \left(\frac{\text {\bf Lip}(u)+\epsilon}{\lambda}\right)^{\frac{p}{N}}<\delta.
\end{equation}
Thus, for any $y\in \hat{E}_{\lambda,u}(x)$, we have
\begin{equation*}
\lambda \delta^{\frac{N}{p}}>\text{\bf Lip} (u)+\epsilon>\text{\bf Lip} (u)\geq \frac{|u(x)-u(y)|}{\d(x,y)} \geq\lambda \big(\d(x,y)\big)^\frac{N}{p}.
\end{equation*}
Hence $\hat{E}_{\lambda,u}(x)\subseteq B_\delta(x)$, and then
\begin{equation}\label{3.25}
\begin{aligned}
\mm\big(\hat{E}_{\lambda,u}(x)\big)
=~ \mm\big(\hat{E}_{\lambda,u}(x)\cap B_\delta(x)\big) 
\le ~\mm\Big(
B_{\left(\frac{\operatorname{Lip}(u)(x)+\varepsilon}{\lambda}\right)^{\frac{p}{N}}}(x)
\Big).
\end{aligned}
\end{equation}
For any $\lambda$ satisfying $0<\left(\frac{ {\bf Lip}(u)+\epsilon}{\lambda}\right)^{\frac{p}{N}}\leq \min \{r_0, \delta\}$, by Assumption~\ref{assump}(B), we have
\begin{equation*}\label{weak}
\lambda^p \mm\big(\hat E_{\lambda, u}(x)\big)\leq \frac{\mm\left(B_{\left(\frac{ {\bf Lip}(u)+\epsilon}{\lambda}\right)^{\frac{p}{N}}}(x)\right)}{\lambda^{-p}}\leq \Theta_{N,r_0}(x)\big( {\bf Lip}(u)+\epsilon\big)^p\in L^1(S,\mm).
\end{equation*}
Moreover, combining \eqref{3.25} and letting $\lambda\rightarrow+\infty$, we have
\begin{equation}\label{3.27}
    \varlimsup_{\lambda\rightarrow +\infty}\lambda^p \mm\big(\hat E_{\lambda, u}(x)\big)\leq\theta_N^+(x)\left({\rm Lip}(u)(x)+\epsilon\right)^p.
\end{equation}
Therefore, by Fatou's lemma, we obtain
\begin{align*}
    \varlimsup_{\lambda\rightarrow +\infty} \lambda^p (\mm\times \mm)({E_{\lambda,u}})
    &\overset{\eqref{3.24}}{\leq} 2\varlimsup_{\lambda\rightarrow +\infty} \lambda^p \int_{S}\mm\big(\hat E_{\lambda, u}(x)\big)\,\d\mm(x)\\
    &\overset{{\rm Fatou}}{\leq} 2 \int_{S}\varlimsup_{\lambda\rightarrow +\infty} \lambda^p\mm\big(\hat E_{\lambda, u}(x)\big)\,\d\mm(x)\\
    &\overset{\eqref{3.27}}{\leq} 2 \int_{X}\theta_N^+(x)\big({\rm Lip}(u)(x)+\epsilon\big)^p \,\d\mm(x).
\end{align*}
Letting $\epsilon\rightarrow0$, we obtain the result.
%Letting $\epsilon\rightarrow 0$, we obtain
%\begin{equation}
%\varlimsup_{\lambda\rightarrow +\infty} \lambda^p (\mm\times \mm)({E_{\lambda,u}})\leq 2 \int_{X}\theta_N^+(x)\big({\rm{Lip}}(u)(x)\big)^p \,\d\mm(x),
%\end{equation}
%which is the thesis.
\end{proof}

  The following counterexample demonstrates that the Assumption \ref{assump}(B) in Theorem~\ref{T3.1} is indispensable in general. Since its construction is technical and auxiliary to the main text, we postpone the detailed proof to Appendix~\ref{appendix}.

\begin{counterexample}\label{counterex}
Let $p,N\geq 1$. There exists a Lipschitz differentiability space $(X,\d,\mm)$ satisfying $\theta_N^+(x)>0$ for $\mm$-{\rm{a.e.}}\ $x\in X$, and a Lipschitz function $u\in\Lip_b(X,\d)$, such that
\begin{equation*}
\varlimsup_{\lambda\rightarrow +\infty} \lambda^p (\mm\times \mm)({E_{\lambda,u}})=+\infty, \quad\mathrm{while }\quad \int_{X}\theta_N^+(x)\big({\rm Lip}(u)(x)\big)^p\,\d\mm(x)<+\infty,
\end{equation*}
where $E_{\lambda,u}$ is defined in \eqref{E}.
\end{counterexample}

   \begin{remark}
       For $0<N<1$, consider the base space $(\R^1, |\cdot|_{\mathrm{Euc}}, \mathcal{L}^1)$. By applying the same construction, we have   $\theta_N^+(x)=0$ for $\mm$-a.e.\ $x\in \R$, and hence the right-hand side vanishes. However, the upper limit on the left-hand side still diverges to infinity. Therefore, the conclusion of Counterexample~\ref{counterex} remains valid.
   \end{remark}

\section{Lipschitz differentiability spaces}\label{subsection}
In this section, we restrict our attention to Lipschitz differentiability spaces. We first consider the case $0<N<1$, in which the BVY formula turns out to be degenerate.

\begin{theorem}\label{degenerate}
   Let $p\geq 1$, $0<N<1$, and let $(X,\d,\mm)$ be a Lipschitz differentiability space. Suppose that Assumption~\ref{assump}(B) holds. Then, for any $u\in \mathrm{Lip}_b(X,\d)$,
    \begin{equation*}
        \lim_{\lambda\rightarrow +\infty} \lambda^p (\mm\times \mm)({E_{\lambda,u}})=0.
    \end{equation*}
\end{theorem}

\begin{proof}
    Denote
    \begin{equation*}
        F:=\{x\in X: \theta_N^+(x)>0\},\quad F_k:=\{x\in X: \theta_N^+(x)\geq \frac{1}{k}\},\quad k\in \N.
    \end{equation*}
{\bf Claim:}  $\mathcal{H}^1(F)=0$, where $\mathcal{H}^1$ denotes the $1$-dimensional Hausdorff measure.

Fix $k\in \N$ and a bounded subset $K\subseteq X$. For any $\delta\in (0,1)$ and any $x\in K\cap F_k$, there exists $r_x\in (0,\frac{\delta}{10})$ such that
\begin{equation}\label{theta}
    \mm\big(B_{r_x}(x)\big)\geq \frac{1}{2k}r_x^N.
\end{equation}
Consider the covering $\mathcal{F}=\{B_{r_x}(x)\}_{x\in K\cap F_k}$ of $K\cap F_k$. By the $5B$-covering lemma \cite[Section 3.3]{{zbMATH06397370}}, there exists a countable sequence $\{B_{r_i}(x_i)\}_{i\in\N}$ of disjoint balls in $\mathcal{F}$, such that
\begin{equation}
    K\cap F_k\subseteq \bigcup_{i\in\N}B_{5r_i}(x_i)\subseteq \{x\in X: \d(x, K)\leq 1\}.
\end{equation}
Thus, by \eqref{theta}, we have  
\begin{equation}
\begin{aligned}
     \sum_{i\in \N} \diam \big(B_{5r_i}(x_i)\big)^N
     \leq 10^N2k\sum_{i\in \N}\mm\big(B_{r_i}(x_i)\big)\leq 10^N2k\mm\big(\{x\in X: \d(x, K)\leq 1\}\big).
\end{aligned}
\end{equation}
%\begin{equation}
%\begin{aligned}
%     \sum_{i\in \N} \diam (B_{5r_i}(x_i))^N
%     \leq  &\sum_{i\in \N} (10r_i)^N\\
%     \leq &10^N2k\sum_{i\in \N}\mm(B_{r_i}(x_i))\leq 10^N2k\mm\left(\{x\in X: \d(x, K)\leq 1\}\right).
%\end{aligned}
%\end{equation}
Letting $\delta\rightarrow0$, we get
\begin{equation}
    \mathcal{H}^N(K\cap F_k)\leq 10^N2k\mm\big(\{x\in X: \d(x, K)\leq 1\}\big)<+\infty,
\end{equation}
which implies $\mathcal{H}^1(K\cap F_k)=0$ since $0<N<1$. By the arbitrariness of $K$, it holds that $\mathcal{H}^1(F_k)=0$. Since $F=\bigcup_{k\in\N}F_k$, we have $\mathcal{H}^1(F)=0$.

 By Theorem \ref{Alberti}, there exists a countable Borel decomposition $X = \cup_j U_j$ such that each $\mm \restr U_j$ has a finite universal collection of Alberti representations. Note that  Assumption~\ref{assump}(B) implies every isolated point is $\mm$-null. Combined with the separability of $X$, this yields $\mm(\mathrm{Iso}(X))=0$; hence, every $0$-dimensional chart is necessarily $\mm$-null (cf. \cite[Remark 3.2]{zbMATH06394350}, Lemma \ref{emptyset}).  
 For $j\in \N^+$, there is an Alberti representation $(\mathbb{P},\{\mm_\gamma\})$ of $\mm \restr U_j$ (cf. \cite[Section 2]{zbMATH06394350}), such that
\begin{equation}
    \mm(F\cap U_j)=\int_{\Gamma(X)}\mm_{\gamma}(F\cap U_j)\,\d\mathbb{P}(\gamma),\quad \mm_{\gamma}\ll \mathcal{H}^1\restr {\mathrm{Im} \gamma}.
\end{equation}
Note that 
\begin{equation}
    \mathcal{H}^1(F\cap U_j \cap  {\mathrm{Im} \gamma})\leq \mathcal{H}^1(F)=0 \quad \Longrightarrow\quad \mm_{\gamma}(F\cap U_j)=0,
\end{equation}
which implies $\mm(F\cap U_j)=0$ for any $j\in \N$, and hence $\mm(F)=0$. (One can also deduce $\mm(F)=0$ by combining the Claim above with \cite[Theorem 4.1.1]{zbMATH07517884}, which yields a more direct argument.) Thus $\theta_N^+(x)=0$ for $\mm$-a.e. $x\in X$; combining this with Theorem \ref{T3.1}, we complete the proof.
\end{proof}

In contrast to the degenerate case $0<N<1$, Counterexample \ref{counterex} demonstrates that for $N \geq 1$, the $N$-upper density function $\theta_N^+$ does not necessarily vanish. Consequently, by combining Theorem \ref{T3.2}, Theorem \ref{T3.1}, Theorem \ref{porous null} and Theorem \ref{2.14}, we obtain the following characterization of Sobolev spaces on Lipschitz differentiability spaces.
\begin{corollary}\label{C3.7}
Let $p,N\geq 1$, $b\geq a>0$, and let $(X,\d,\mm)$ be a Lipschitz differentiability space satisfying Assumption \ref{assump}(B). Assume that there exists $a\leq \theta^-_N(x)\leq \theta^+_N(x)\leq b$ for $\mm$-{\rm{a.e.}}\ $x\in X$. Then there exist constants $C_1:=C_1(p,N,a)$ and $C_2:=C_2(b)$ such that, for any $u\in {\rm Lip}_b(X,\d)$,
\begin{equation*}
\begin{aligned}
&\varliminf_{\lambda\rightarrow +\infty} \lambda^p (\mm\times \mm)({E_{\lambda,u}})\geq C_1\||\D u|_*\|_{L^p(X,\mm)}^p,\\
&\varlimsup_{\lambda\rightarrow +\infty} \lambda^p (\mm\times \mm)({E_{\lambda,u}})\leq C_2\||\D u|_*\|_{L^p(X,\mm)}^p.
\end{aligned}
\end{equation*}
\end{corollary}

 Compared with Theorem \ref{degenerate}, the BVY formula does not degenerate when $N \geq 1$. Consequently, by Theorem \ref{T3.2}, Lemma \ref{emptyset}, Theorem \ref{2.14} and the assumption $\supp(\mm)=X$, we have the following corollary. 

 %\begin{corollary}\label{3.7}
    %Let $p,N\geq 1$, and let $(X,\d,\mm)$ be a Lipschitz differentiability space such that
	%$\theta^-_N(x)>0$ for $\mm$-a.e.\ $x\in X$. Suppose that for any %$u\in {\rm{Lip}}_b(X,\d)$, 
    %\begin{equation*}
     %   \lim_{\lambda\rightarrow +\infty} \lambda^p (\mm\times \mm)({E_{\lambda,u}})=0.
    %\end{equation*}
    %Then the following are equivalent:
    %	\begin{itemize}
	%	\item[\bf(a)] For every $u\in \mathrm{Lip}_b(X,\d)$,
	%	\[
	%	\lim_{\lambda\to+\infty}\lambda^p(\mm\times\mm)(E_{\lambda,u})=0.
	%	\]
	%	\item[\bf(b)] $(X,\d,\mm)$ is purely $0$-dimensional as a Lipschitz differentiability space,
	%	i.e.\ it admits no chart of positive dimension on a set of positive $\mm$-measure.
	%\end{itemize}
    %Moreover, these conditions imply $\mm$ is supported on a purely $1$-unrectifiable set.
%\end{corollary}

\begin{corollary}\label{3.7}
    Let $p,N\geq 1$. There does not exist a Lipschitz differentiability space $(X,\d,\mm)$ with $0<\theta^-_N(x)\leq \theta^+_N(x)<+\infty$ for $\mm\text{-\rm{a.e.} }x\in X$, such that for any $u\in {\rm{Lip}}_b(X,\d)$, 
    \begin{equation*}
        \lim_{\lambda\rightarrow +\infty} \lambda^p (\mm\times \mm)({E_{\lambda,u}})=0.
    \end{equation*}
\end{corollary}

\section{Equivalence of the characterizations}\label{subsections}
In this final section, we present two equivalent characterizations bridging the BVY formula and the local dimensional and density structures of the underlying metric measure space. Our first result concerns the uniqueness of the dimensional parameter $N$ in Lipschitz differentiability spaces.
\begin{theorem}\label{T3.5}
Let $p,N,\bar{N}\geq 1$, and let $(X,\d,\mm)$ be a Lipschitz differentiability space. Suppose that Assumption~\ref{assump}(B) holds. If $\theta_N^-(x)>0$ for $\mm\text{-\rm{a.e.} }x\in X$, then the following statements are equivalent:
\begin{itemize}
\item[\bf{(a)}] There exists $u\in {\rm Lip}_b(X,\d)$ such that
\begin{equation}\label{3.0}
0<\varliminf_{\lambda\rightarrow +\infty} \lambda^p (\mm\times \mm)({\bar{E}_{\lambda,u}})\leq
\varlimsup_{\lambda\rightarrow +\infty} \lambda^p (\mm\times \mm)({\bar{E}_{\lambda,u}})<+\infty,
\end{equation}
where
\begin{equation}\label{que}
\bar{E}_{\lambda,u}:=\left\{(x,y)\in X\times X: x\neq y,\ |u(x)-u(y)|\geq \lambda \big(\d(x,y)\big)^{\frac{\bar{N}}{p}+1}\right\};
\end{equation}

\item[\bf{(b)}] $\bar{N}=N$.
\end{itemize}
\end{theorem}
\begin{proof}
${\bf (b)} \Rightarrow \bf (a)$. Let $\bar{N}=N$. For $u\in\Lip(X,\d)$, by Theorem \ref{2.14}, we have  $\Lip(u)=\text{lip}(u)$ for $\mm$-a.e. $x\in X$.
%Note that Assumption~\ref{assump}(B) implies $\theta_N^+$ is integrable on bounded sets. Combined with the separability of $X$, this yields $\theta^+_N(x)<+\infty$ for $\mm\text{-\rm{a.e.} }x\in X$. 
%Note that if $\Theta_{N,r_0}$ is integrable on bounded sets, then so is $\theta_N^+$. 
%Therefore, Theorem~\ref{T3.1} yields the desired finite upper bound and Corollary \ref{3.7} yields the strictly positive lower bound. 
Similar to Theorem \ref{degenerate}, we have $\mm(\mathrm{Iso}(X))=0$. Combining with Lemma \ref{emptyset} and the assumption $\supp(\mm)=X$, there exists $u\in {\rm Lip}_b(X,\d)$ such that $$\mm\big(\{x\in X: {\rm Lip}(u)(x)>0\}\big)>0.$$
Combining this with the assumption on $\theta_N^-(x)$ and applying Theorem \ref{T3.2} implies the strictly positive lower bound.
%   \begin{equation*}
%\varliminf_{\lambda\rightarrow +\infty} \lambda^p (\mm\times \mm)({\bar E_{\lambda,u}}) \geq C\int_{X}\theta_N^-(x)\,
%\frac{\big({\rm{lip}}(u)(x)\big)^{N+p}}{\big({\rm{Lip}}(u)(x)\big)^N}\,\d\mm(x)>0.
%\end{equation*}

On the other hand, note that if $\Theta_{N,r_0}$ is integrable on bounded sets, then so is $\theta_N^+$. Therefore, Theorem~\ref{T3.1} yields the desired finite upper bound.
%\begin{equation*}
%\varlimsup_{\lambda\rightarrow +\infty} 
%\lambda^p (\mm\times \mm)({\bar E_{\lambda,u}})\leq 
%2\big(\text {\bf Lip} (u)\big)^p\|\theta^+_N\|_{L^1_{{\rm{bloc}}}(X,\mm)}<+\infty.
%\end{equation*}

${\bf (a)} \Rightarrow \bf (b)$. We proceed by contradiction.
\newline
{\bf Case 1:} $\bar{N}<N$. Similar to the proof of Theorem \ref{T3.1}, we have  
	\begin{equation*}
		\begin{aligned}
			\varlimsup_{\lambda\rightarrow +\infty} \lambda^p (\mm\times \mm)({\bar{E}_{\lambda,u}})
				\overset{\eqref{3.24}}{\leq}& 2\varlimsup_{\lambda\rightarrow +\infty} \lambda^p \int_{S}\mm\big( \hat{\bar{E}}_{\lambda,u}(x)\big)\,\d\mm(x)\\
			\overset{\eqref{3.25}}{\leq}& 2 \varlimsup_{\lambda\rightarrow +\infty} \lambda^p\int_{S} \mm\left(B_{\left(\frac{\text {Lip}(u)(x)+\epsilon}{\lambda}\right)^\frac{p}{\bar{N}}}(x)\right) \d \mm(x)\\
			\overset{\phantom{\rm{Fatou}}}{\leq}& 2\varlimsup_{\lambda\rightarrow +\infty}\lambda^{p\left(1-\frac{N}{\bar{N}}\right)}\int_{S} \lambda^{\frac{pN}{\bar{N}}}\mm\left(B_{\left(\frac{\text {Lip}(u)(x)+\epsilon}{\lambda}\right)^\frac{p}{\bar{N}}}(x)\right) \d \mm(x)\overset{*}{=}0,
		\end{aligned}
	\end{equation*}
	where $(*)$ follows from the integrability of $\Theta_{N,r_0}$ on bounded sets. This contradicts the strictly positive lower bound in \eqref{3.0}.
    \newline
{\bf Case 2:} $\bar{N}>N$. By Theorem \ref{T3.1} and the strictly positive lower bound in \eqref{3.0}, we have $$\mm\big(\{x\in X: \text{lip}(u)(x)>0\}\big)=\mm\big(\{x\in X: \Lip(u)(x)>0\}\big)>0.$$ 

We adopt the same notation and argument as in the proof of Theorem \ref{T3.2}. Fix $i_0\in\N$ with $\mm(L_{i_0})>0$. We choose $\epsilon>0$ such that $A_{\epsilon, i_0}\subseteq L_{i_0}$, defined by \eqref{3.14}, satisfies $\mm(A_{\epsilon,i_0})>0$ and $$\lip u(x)=\Lip(u)(x)>2^{i_0+3}\epsilon, \qquad\forall \, x \in  A_{\epsilon,i_0}.$$

Since $\theta_N^-(x)>0$ for $\mm$-a.e.\ $x\in X$, by shrinking $\epsilon>0$ if necessary, there exists $M\in \N$ such that $\mm(A_{\epsilon,i_0}\cap \{\theta_{N,M}^-\geq 3\epsilon\})>0$. Denote
\[
A:=A_{\epsilon, i_0}\cap \{\theta_{N,M}^-\geq 3\epsilon\}.
\]
Then, for $x\in A$, letting $\lambda, r>0$ be such that $\lambda r^{\bar{N}/p}=\frac{1}{8}\lip u(x)$, we have
\begin{equation*}
				\begin{aligned}
					\varliminf_{\lambda\rightarrow +\infty} \lambda^p (\mm\times \mm)({\bar{E}_{\lambda,u}})
						\overset{\eqref{33}}{\geq}& \varliminf_{\lambda\rightarrow +\infty} \lambda^p \int_{L_{i_0}}\mm\big(\hat {\bar{E}}_{\lambda, u}(x)\big)\, \d \mm(x)\\
						\overset{\eqref{222}}{\geq}& \varliminf_{\lambda\rightarrow +\infty}\lambda^p\int_{A} (\theta_{N,M}^-(x)-2\epsilon)\left(\frac{r}{2^{i_0+5}}\right)^N \,\d\mm(x)\\
					\overset{\phantom{\rm{Fatou}}}{\geq}& \varliminf_{\lambda\rightarrow +\infty} \frac{\lambda^{p\left(1-\frac{N}{\bar{N}}\right)}}{ 2^{(i_0+5)N}8^{\frac{pN}{\bar{N}}} } \int_{A}\epsilon |\lip u(x)|^{\frac{pN}{\bar{N}}}\,\d\mm(x)=+\infty,
				\end{aligned}
		\end{equation*}
which contradicts the finite upper bound in \eqref{3.0}.

Therefore, combining {\bf{Case 1}} and {\bf{Case 2}}, we conclude that $\bar{N}=N$.
\end{proof}

As our second result, we provide an equivalent characterization of the BVY formula in terms of the density functions on doubling geodesic metric measure spaces.
	%We recall the result from \cite[Lemma 8.3]{zbMATH06394350} first.
	%\begin{proposition}
	%Let $(X, \d, \mm)$ be a pointwise doubling metric measure space. Then there exists a Borel decomposition 
	%\begin{equation}
	%	X=N\cup\bigcup_{k=1}^{+\infty} X_k,
	%\end{equation}
	%where $\mm(N)=0$ and for $k\in\N$, $(X_k, \d\restr{k}, \mm\restr{X_k})$ is a doubling metric measure space with doubling constant $\beta_k$.
	%\end{proposition}

\begin{theorem}\label{final thm}
	Let $p\geq 1$ and let $(X,\d,\mm)$ be a geodesic metric measure space, where $\mm$ is a doubling measure with optimal doubling constant $\beta$. 
    %equipped with a $\beta$-doubling measure $\mm$. 
    Set $N=\frac{\log \beta}{\log 2}$ (the doubling dimension). Then the following statements are equivalent: 
		\begin{itemize}
		\item[\bf{(a)}] There exist positive constants $C_1$ and $C_2$ such that for any $u\in {\rm Lip}_b(X,\d)$,
		\begin{equation*}
			\begin{aligned}
				C_1\int_{X}\frac{\big({\rm lip}(u)(x)\big)^{N+p}}{\big({\rm Lip}(u)(x)\big)^N}&\,\d\mm(x) \leq \varliminf_{\lambda\rightarrow +\infty} \lambda^p (\mm\times \mm)({E_{\lambda,u}})\\
				 & \leq  \varlimsup_{\lambda\rightarrow +\infty} \lambda^p (\mm\times \mm)({E_{\lambda,u}}) \leq C_2\int_{X}\big({\rm Lip}(u)(x)\big)^p\,\d\mm(x),
			\end{aligned}
		\end{equation*}
		where $E_{\lambda,u}$ is defined in \eqref{E}.
		\item[\bf{(b)}] There exist positive constants $a_1$ and $a_2$ such that
        \begin{equation*}
            a_1\leq\theta_{N}^-(x)\leq \theta_{N}^+(x)\leq a_2, \quad\quad \mathrm{for}\,\,\mm\mathrm{\text{-}a.e. }\,x \in X.
        \end{equation*}
		\end{itemize}
\end{theorem}

\begin{proof}
${\bf (b)} \Rightarrow {\bf (a)}$. Since $\mm$ is a doubling measure, combining Lemma \ref{2.11} with Theorem \ref{T3.2} directly yields the lower bound, where $C_1:=C_1(a_1,p,N)$.

As for the upper bound, similar to the proof of Theorem \ref{T3.1}, we have  
\begin{equation}\label{thees}
		\varlimsup_{\lambda\rightarrow +\infty} \lambda^p (\mm\times \mm)({E_{\lambda,u}})
	\leq 2\varlimsup_{\lambda\rightarrow +\infty} \lambda^p \int_{S}\mm\big( \hat{E}_{\lambda,u}(x)\big)\,\d\mm(x).
\end{equation}
Fix $\epsilon>0$. By Proposition \ref{2.166}, for any $0<r<\left(\frac{ {\bf Lip}(u)+\epsilon}{\lambda}\right)^{\frac{p}{N}}$, we have
\begin{equation*}
	\frac{\mm\left(B_{\left(\frac{ {\bf Lip}(u)+\epsilon}{\lambda}\right)^{\frac{p}{N}}}(x)\right)}{\lambda^{-p}}\leq \beta^2\left( \frac{\left(\frac{ {\bf Lip}(u)+\epsilon}{\lambda}\right)^{\frac{p}{N}}}{r}\right)^{\frac{\log \beta}{\log 2}}\frac{\mm\big(B_r(x)\big)}{\lambda^{-p}}=\beta^2\big( {\bf Lip}(u)+\epsilon\big)^p\frac{\mm\big(B_r(x)\big)}{r^N}.
\end{equation*}
Letting $r\rightarrow 0^+$, we obtain
\begin{equation*}
	\lambda^p\mm\left(B_{\left(\frac{ {\rm Lip}(u)(x)+\epsilon}{\lambda}\right)^{\frac{p}{N}}}(x)\right)\leq a_2 \beta^2\big( {\bf Lip}(u)+\epsilon\big)^p\in L^1(S,\mm).
\end{equation*}
Thus, by \eqref{thees}, \eqref{3.25} and \eqref{3.27}, we obtain the upper bound, where $C_2:=C_2(a_2)$.
%\begin{equation*}
%\begin{aligned}
%\varlimsup_{\lambda\rightarrow +\infty} \lambda^p (\mm\times \mm)({E_{\lambda,u}})
%&\leq~2 \int_{\mathrm{supp}\,u}\varlimsup_{\lambda\rightarrow +\infty} \lambda^p\mm\big( \hat{E}_{\lambda,u}(x)\big)\,\d\mm(x)\\
%&\leq ~2 \int_{\mathrm{supp}\,u}\varlimsup_{\lambda\rightarrow +\infty} \lambda^p\mm\left(B_{\left(\frac{ {\rm Lip}(u)(x)+\epsilon}{\lambda}\right)^{\frac{p}{N}}}(x)\right) \,\d\mm(x)\\
%&\leq ~C_2 \int_{X}\big({\rm{Lip}}(u)(x)+\epsilon\big)^p \,\d\mm(x),
%\end{aligned}
%\end{equation*}

${\bf (a)} \Rightarrow {\bf (b)}$.  For any $x_0\in X$, since $\mm$ is a Radon measure, we have $\mm(\partial B_R(x_0))=0$ for a.e.\ $R>0$.
For such $x_0\in X$ and $R>0$ with $\mm(\partial B_R(x_0))=0$, define $u(x):=\max\big\{0, R-\d(x,x_0)\big\}\in {\rm Lip}_b(X,\d)$.
% then $\text{\bf Lip} (u)=1$. 
Since $(X,\d)$ is a geodesic space, for $\mm$-a.e.\ $x\in X$, we have
\begin{equation*}
	\text{Lip} (u)(x)=\text{lip} (u)(x)=1,\,\, \text{on } B_R(x_0),\quad 	\text{Lip} (u)(x)=\text{lip} (u)(x)=0,\,\,\text{on } X\setminus  B_R(x_0).
\end{equation*}
Therefore, by assumption {\bf (a)}, we have
\begin{equation*}
	C_1\mm\big(B_R(x_0)\big)\leq \varliminf_{\lambda\rightarrow +\infty} \lambda^p (\mm\times \mm)({E_{\lambda,u}})
	 \leq  \varlimsup_{\lambda\rightarrow +\infty} \lambda^p (\mm\times \mm)({E_{\lambda,u}}) \leq	C_2\mm\big(B_R(x_0)\big).
\end{equation*}
Then there exists $\lambda_0:=\lambda_0(x_0,R)$ such that for any $\lambda\geq \lambda_0$, we have
\begin{equation}\label{3.35}
		\frac{C_1}{2}\mm\big(B_R(x_0)\big)\leq \lambda^p (\mm\times \mm)({E_{\lambda,u}})\leq 2C_2\mm\big(B_R(x_0)\big).
\end{equation}

Denote $r:=\lambda^{-\frac{p}{N}}$ and choose $0<t\leq \frac{2}{3}\left(\frac{1}{3}\right)^{\frac{p}{N}}$. Recalling the definition of $\hat{E}_{\lambda,u}(x)$ in \eqref{3.2}, for any $x\in B_R(x_0)$, if $y\in  \hat{E}_{\lambda,u}(x)$, then
\begin{equation}
	\lambda\big(\d(x,y)\big)^\frac{N}{p}\leq \frac{|u(x)-u(y)|}{\d(x,y)}\leq 1,
\end{equation}
which implies $\hat{E}_{\lambda,u}(x)\subseteq B_{r}(x)$.
%\begin{equation}\label{da}
%    \hat{E}_{\lambda,u}(x)\subseteq B_{r}(x).
%\end{equation}

\noindent\textbf{Claim:} For any $x\in B_{R-\frac{tr}{2}}(x_0)\setminus B_{tr}(x_0)$, there exists $y\in B_R(x_0)$ such that $B_{\frac{tr}{2}}(y)\subseteq \hat{E}_{\lambda,u}(x)$.
%\begin{equation}\label{xiao}
%    B_{\frac{tr}{2}}(y)\subseteq \hat{E}_{\lambda,u}(x).
%\end{equation}

Fix $x\in B_{R-\frac{tr}{2}}(x_0)\setminus B_{tr}(x_0)$, and let $\gamma(s):[0,\d(x,x_0)]\rightarrow X$ be a unit-speed geodesic
from $x$ to $x_0$. Denote $y=\gamma(tr)$. Then $y\in B_R(x_0)$ and $u(y)=R-\d(y,x_0)$, hence
	\begin{equation}\label{eq:uy-ux}
	\big|u(y)-u(x)\big|=\left|\big(R-\d(y,x_0)\big)-\big(R-\d(x,x_0)\big)\right|=tr.
\end{equation}

For any $z\in B_{\frac{tr}{2}}(y)$, we have $z\in B_R(x_0)$ and
\begin{equation}\label{3.37}
	\d(z,x)\leq \d(z,y)+\d(y,x)\leq \frac{3tr}{2}.
\end{equation}
Moreover, since $u$ is $1$-Lipschitz, we have
\begin{equation}
	|u(x)-u(z)|\geq |u(x)-u(y)|-|u(y)-u(z)|\geq tr-\d(y,z)\geq \frac{tr}{2}.
\end{equation}
Therefore, by the choice of $r$ and $t$, we obtain
\begin{equation}
		|u(x)-u(z)|\geq \frac{r}{2}\geq \left(\frac{3t}{2}\right)^{\frac{N}{p}+1}r\overset{\eqref{3.37}}{\geq}\lambda \big(\d(x,z)\big)^{\frac{N}{p}+1},
\end{equation}
which implies that $z\in \hat{E}_{\lambda,u}(x)$. This proves the claim.

Since, by \eqref{3.37}, we have $B_{\frac{tr}{2}}(y)\subseteq B_{\frac{3tr}{2}}(x)$, Proposition \ref{2.166} yields
\begin{equation}\label{xiao2}
	\frac{\mm\big(B_{\frac{tr}{2}}(y)\big)}{\mm\big(B_{\frac{3tr}{2}}(x)\big)}\geq \frac{1}{\beta^2}\left(\frac{1}{3}\right)^
	\frac{\log \beta}{\log 2}.
\end{equation}
Thus, by \eqref{33} and \eqref{3.24}, we have
\begin{equation}\label{3.42}
\begin{aligned}
    C(\beta,p,N)&\int_{B_{R-\frac{tr}{2}}(x_0)\setminus B_{tr}(x_0)}\frac{\mm\big(B_{\frac{3tr}{2}}(x)\big)}{\left(\frac{3tr}{2}\right)^N}\,\d\mm (x)\\
    &\leq \lambda^p (\mm\times \mm)({E_{\lambda,u}})\leq 2\int_{B_R(x_0)}\frac{\mm\big(B_r(x)\big)}{r^N}\,\d\mm(x).
\end{aligned}
\end{equation}

Combining \eqref{3.35} with \eqref{3.42}, and choosing $r_0:=r_0(x_0,R)=\min\{ \lambda_0^{-p/N}, R/t\}$, for any $0<r\leq r_0$, we have  
\begin{equation}\label{low}
	\begin{aligned}
			\int_{B_{\frac{R}{2}}(x_0)\setminus B_{tr}(x_0)}\frac{\mm\big(B_{\frac{3tr}{2}}(x)\big)}{\left(\frac{3tr}{2}\right)^N}\,\d\mm(x)	
	\leq C(\beta,p,N,C_2)\mm\big(B_R(x_0)\big),
	\end{aligned}
\end{equation}
%\begin{equation}\label{low}
%	\begin{aligned}
%			\int_{B_{\frac{R}{2}}(x_0)\setminus B_{tr}(x_0)}\frac{\mm\left(B_{\frac{3}{2}tr}(x)\right)}{\left(\frac{3}{2}tr\right)^N}\,\d\mm(x)
%			&\leq  \int_{B_{R-\frac{t}{2}r}(x_0)\setminus B_{tr}(x_0)}\frac{\mm\left(B_{\frac{3}{2}tr}(x)\right)}{\left(\frac{3}{2}tr\right)^N}\,\d\mm(x)\\
%	&\leq C(\beta,p,N,C_2)\mm(B_R(x_0)),
%	\end{aligned}
%\end{equation}
and
\begin{equation}\label{upp}
	 \int_{B_R(x_0)}\frac{\mm\big(B_r(x)\big)}{r^N}\,\d\mm(x)\geq C(C_1)\mm\big(B_R(x_0)\big).
\end{equation}

Note that
\begin{equation}
	\frac{\mm\big(B_r(x)\big)}{r^N}\leq \beta\frac{\mm\big(B_\frac{r}{2}(x)\big)}{r^N}= \frac{\mm\big(B_\frac{r}{2}(x)\big)}{\left(\frac{r}{2}\right)^N},
\end{equation}
Then denote $r_n:=2^{-n}$ and
\begin{equation}
	f(x):=\lim_{n\rightarrow +\infty}\frac{\mm\big(B_{r_n}(x)\big)}{r_n^N},\quad g(x):=\lim_{n\rightarrow +\infty}\frac{\mm\big(B_{\frac{3tr_n}{2}}(x)\big)}{\left(\frac{3tr_n}{2}\right)^N}.
\end{equation}
Recall that $t\leq \frac{2}{3}\left(\frac{1}{3}\right)^{\frac{p}{N}}<\frac{2}{3}$, and hence $B_{\frac{3tr_n}{2}}(x)\subset B_{r_n}(x)$. By Proposition \ref{2.166}, we have $f(x)\leq C(\beta, p, N)g(x)$. Let $n$ be sufficiently large and replace $r$ with $r_n$ in \eqref{low}. Note that for $m\leq n$, we have $B_{tr_n}(x_0)\subseteq B_{tr_m}(x_0)$, and hence 
\begin{equation}
	\int_{B_{\frac{R}{2}}(x_0)\setminus B_{tr_m}(x_0)}\frac{\mm\big(B_{\frac{3tr_n}{2}}(x)\big)}{\left(\frac{3tr_n}{2}\right)^N}\,\d\mm(x)\leq 	C(\beta,p,N,C_2)\mm\big(B_R(x_0)\big).
\end{equation}
Letting $n\rightarrow+\infty$, applying the monotone convergence theorem, and then letting $m\rightarrow+\infty$, we obtain
\begin{equation*}
	\frac{1}{\mm\big(B_R(x_0)\big)}\int_{B_{\frac{R}{2}}(x_0)}f(x)\,\d\mm(x)\leq \frac{C(\beta, p, N)}{\mm\big(B_R(x_0)\big)}\int_{B_{\frac{R}{2}}(x_0)}g(x)\,\d\mm(x)\leq C(\beta,p,N,C_2).
\end{equation*}
By the doubling condition,
\begin{equation*}
\frac{1}{\mm\big(B_\frac{R}{2}(x_0)\big)}\int_{B_{\frac{R}{2}}(x_0)}f(x)\,\d\mm(x)\leq \frac{\beta}{\mm\big(B_R(x_0)\big)}\int_{B_{\frac{R}{2}}(x_0)}f(x)\,\d\mm(x)\leq C(\beta,p,N,C_2).
\end{equation*}

Similarly, replacing $r$ with $r_n$ in \eqref{upp} and letting $n\rightarrow +\infty$ yields
%the monotone convergence theorem 
\begin{equation*}
	\frac{1}{\mm\big(B_R(x_0)\big)}\int_{B_R(x_0)}f(x)\,\d\mm(x)\geq C(C_1).
\end{equation*}
Letting $R\rightarrow 0^+$, we deduce from the Lebesgue differentiation theorem (cf. \cite[Section 3.4]{zbMATH06397370}) that, for $\mm$-a.e.\ $x_0\in X$,
\begin{equation*}
	C(C_1)\leq f(x_0)\leq C(\beta,p,N,C_2).
\end{equation*}

Finally, for any $0<r<1$, there exists $n\in \N$ such that $r_{n+1}<r\leq 2r_{n+1}$, and we have
\begin{equation*}
		\frac{\mm\big(B_{r_{n+1}}(x)\big)}{2^{N}r_{n+1}^N}\leq \frac{\mm\big(B_r(x)\big)}{r^N}\leq	\frac{\mm\big(B_{2r_{n+1}}(x)\big)}{r_{n+1}^N}\leq 	\frac{\beta\mm\big(B_{r_{n+1}}(x)\big)}{r_{n+1}^N}.
\end{equation*}
Letting $r\rightarrow 0^+$, and hence also $r_{n+1}\rightarrow 0^+$, we obtain
\begin{equation*}
	C(N,C_1)\leq 2^{-N}f(x)\leq \theta_{N}^-(x)\leq \theta_{N}^+(x)\leq \beta f(x)\leq C(\beta,p,N,C_2),
\end{equation*}
which completes the proof.
\end{proof}

%\begin{remark}
%   In fact, the lower bound for the density function follows directly from Proposition~\ref{2.166} when \(N=\frac{\log \beta}{\log 2}\).
%\end{remark}

\begin{appendices}
\section{Proof of Counterexample \ref{counterex}}\label{appendix}

\begin{proof}
For $N\geq 1$, the base space $(X,\d,\mu)$ we use is an $N$-Ahlfors regular, unbounded, proper, and geodesic PI space. More precisely, for $N>1$, we use the Laakso space (cf. \cite[Theorem 2.7]{zbMATH01443217}), whereas for $N=1$, we take the real line $\R$ equipped with the Euclidean metric and Lebesgue measure. Such a space is necessarily a Lipschitz differentiability space (cf. \cite{MR1708448}). Recall that $N$-Ahlfors regularity means that there exist positive constants $c_1$ and $c_2$ such that
\begin{equation}\label{AR}
c_1 r^N\leq\mu(B_r(x))\leq c_2r^N, \quad \forall\, x\in X,\,\,  r>0.
\end{equation}

We divide the construction into four steps.

\paragraph{Step 1.}
Fix $a_0\in X$, $R>0$, and denote $r_k=2^{-k}$, $\rho_k=2^{-k^2}$ for $k\in\N^+$. Choose $K_0\in\N$ sufficiently large so that
\begin{equation}\label{3.43}
\sum_{k\geq K_0} 2c_2(20r_k)^N<\mu(B_R(a_0))\quad\text{and} \quad r_k\leq \frac{R}{2},\quad \forall\, k\geq K_0.
\end{equation}
We claim that there exist two sequences $\{x_k\}_{k\geq K_0}$ and $\{y_k\}_{k\geq K_0}$ satisfying
\begin{itemize}
    \item 
    \begin{equation}\label{eq:cond_a}
        \d(x_k,y_k) = 3r_k;
    \end{equation}
    \item 
    \begin{equation}\label{eq:cond_b}
        \forall\, j<k, \quad \d(x_k,x_j) \geq 20 r_j, \quad \d(x_k,y_j) \geq 20r_j;
    \end{equation}
    \item 
    \begin{equation}\label{eq:cond_c}
        \forall\, j<k, \quad \d(y_k,x_j) \geq 10 r_j, \quad \d(y_k,y_j) \geq 10r_j.
    \end{equation}
\end{itemize}

Proceeding by induction, assume that we have already constructed the points for $K_0,\dots,k-1$. For $k\in\N$, define
\begin{equation*}
F_k:=\bigcup_{K_0\leq j\leq k-1}\left(B_{20r_j}(x_j)\cup B_{20r_j}(y_j)\right).
\end{equation*}
By \eqref{AR} and \eqref{3.43}, we have $\mu(B_R(a_0)\cap F_k)<\mu(B_R(a_0))$, and hence there exists $x_k\in B_R(a_0)\setminus F_k$ satisfying \eqref{eq:cond_b}.

Since $(X,\d)$ is a geodesic space, there exists $y_k\in X$ such that $\d(x_k,y_k)=3r_k$. Moreover, by the triangle inequality, \eqref{eq:cond_c} holds.

\paragraph{Step 2.}
Denote $S_k=B_{\rho_k}(x_k)$ and $Y_k=B_{\rho_k}(y_k)$ for $k\geq K_0$. Since $\mu$ is a Radon measure, for any $x \in X$, the spherical boundary $\partial B_r(x)$ has positive $\mu$-measure for at most countably many radii $r>0$. Thus, by slightly perturbing $\rho_k$ if necessary, we may assume without loss of generality that $\mu(\partial S_k)=\mu(\partial Y_k)=0$. Note that
\begin{equation}\label{r_k}
\d(x,y)\leq  \d(x_k,y_k)+2\rho_k\leq 4r_k,\quad \forall \, x\in S_k,\, y\in Y_k.
\end{equation}
Moreover, by \eqref{eq:cond_a}, \eqref{eq:cond_b}, and \eqref{eq:cond_c}, one checks that
\begin{equation}
S_k\cap Y_k=\emptyset,\quad\forall\, k\geq K_0,\quad\text{and}\quad (S_k\cup Y_k)\cap(S_j\cup Y_j)=\emptyset,\quad \forall \, k\neq j.
\end{equation}

Define the Radon measure $\mm=\omega \mu$ with
\begin{equation}\label{omega}
\omega(x)=1+\sum_{k\geq K_0}\frac{1}{k^2\mu(S_k)}\chi_{S_k}(x)+\sum_{k\geq K_0}\frac{1}{k^2\mu(Y_k)}\chi_{Y_k}(x).
\end{equation}
Then $\mm(S_k)=\frac{1}{k^2}+\mu(S_k)$, $\mm(Y_k)=\frac{1}{k^2}+\mu(Y_k)$, and $\mm\ll \mu$.

Consider the space $(X,\d,\mm)$. Since $\mm\ll \mu$, the space $(X,\d,\mm)$ is also a Lipschitz differentiability space. Moreover, by \eqref{AR},
\begin{equation*}
\theta_N^+(x)=\varlimsup_{r\to 0^+}\frac{\mm\big(B_r(x)\big)}{r^N}\geq \varlimsup_{r\to 0^+}\frac{\mu\big(B_r(x)\big)}{r^N}\geq c_1>0,\quad \forall\, x\in X.
\end{equation*}

\paragraph{Step 3.}
Define $u_k(x)=\max\{0, r_k-\d(x,S_k)\}$ for $k\in\N$, and set
\[
u(x)=\sum_{k\geq K_0}u_k(x).
\]
Note that each $u_k$ is $1$-Lipschitz and $\supp u_k\subseteq B_{2r_k}(x_k)$. By \eqref{eq:cond_b}, we have
\begin{equation*}
\supp u_k\cap \supp u_j=\emptyset,\quad \forall \, k\neq j.
\end{equation*}
Since, by construction, $x_k\in B_R(a_0)\setminus F_k$, it follows that
\begin{equation*}
\supp u\subseteq \bigcup_{k\geq K_0}B_{2r_k}(x_k)\subseteq B_{2R}(a_0).
\end{equation*}
Moreover, one can check that $u$ is still $1$-Lipschitz, and hence $u\in \Lip_b(X,\d)$.

\paragraph{Step 4.}
On the one hand, denote
\begin{equation*}
Z_k:=\{x\in X: 0<\d(x,S_k)\leq r_k\},\quad \forall \, k\geq K_0.
\end{equation*}
Since $\mu(\partial S_k)=0$, we have $\Lip(u)=0$ for $\mm$-a.e.\ $x\in X\setminus\cup_{k\geq K_0}Z_k$. Moreover, for any $x\in \cup_{k\geq K_0}Z_k$, by \eqref{eq:cond_b} and \eqref{eq:cond_c}, it holds that
\begin{equation*}
\d\bigl(x, \cup_{j\geq K_0}(S_j\cup Y_j)\bigr)>0.
\end{equation*}
Hence, by \eqref{omega}, there exists a sufficiently small neighborhood of $x$ on which $\mm=\mu$. Consequently, by \eqref{AR}, we have
\[
\theta_N^+(x)\leq c_2,\qquad x\in \cup_{k\geq K_0}Z_k.
\]
Thus,
\begin{equation*}
\int_{X}\theta_N^+(x)\big({\rm Lip}(u)(x)\big)^p\,\d\mm(x)\leq\int_{\cup_{k\geq K_0}Z_k}\theta_N^+(x)\,\d\mu(x)\leq c_2^2\sum_{k\geq K_0}(2r_k)^N<+\infty.
\end{equation*}

On the other hand, by \eqref{eq:cond_b} and \eqref{eq:cond_c}, we have $u(y)=0$ for all $y\in Y_k$. Take $\lambda_k=4^{-\frac{N}{p}-1}r_k^{-\frac{N}{p}}$. For any $(x,y)\in S_k\times Y_k$, we have $|u(x)-u(y)|=r_k$, and by \eqref{r_k},
\begin{equation*}
|u(x)-u(y)|= \lambda_k (4r_k)^{\frac{N}{p}+1}\geq \lambda_k\big(\d(x,y)\big)^{\frac{N}{p}+1},
\end{equation*}
which implies $(x,y)\in E_{\lambda_k,u}$, and hence $S_k\times Y_k\subseteq E_{\lambda_k,u}$.

Therefore, recalling that $\mm(S_k)=\frac{1}{k^2}+\mu(S_k)$ and $\mm(Y_k)=\frac{1}{k^2}+\mu(Y_k)$, we obtain
\begin{equation*}
\varlimsup_{\lambda\rightarrow +\infty} \lambda^p (\mm\times \mm)({E_{\lambda,u}})\geq \lim_{k\rightarrow +\infty} \lambda_k^p (\mm\times \mm)({E_{\lambda_k,u}})\geq \lim_{k\rightarrow +\infty}\lambda_k^p\mm(S_k)\mm(Y_k)=+\infty.
\end{equation*}
This completes the proof.
\end{proof}

\end{appendices}

	\bibliographystyle{siam} %按作者姓氏首字母顺序排序
	\bibliography{Reference.bib} %调用参考文献文档

    \bigskip
    
\noindent \textbf{Declaration.}
The authors declare that there is no conflict of interest. The manuscript has no associated data.

\end{document}